\theoremstyle{plain}
    \newtheorem{thm}{Theorem}[section]
    \newtheorem{ppn}[thm]{Proposition}
    \newtheorem{lem}[thm]{Lemma}
    \newtheorem{cor}[thm]{Corollary}
\theoremstyle{definition}
    \newtheorem{rmk}[thm]{Remark}
    \newtheorem{ex}[thm]{Example}
\numberwithin{equation}{section}
\def\Ker{\operatorname{Ker}}\def\Hom{\operatorname{Hom}}
\def\C{\mathbb{C}}\def\Q{\mathbb{Q}}\def\R{\mathbb{R}}\def\Z{\mathbb{Z}}\def\F{\mathbb{F}}
\def\ol#1{\overline{#1}}\def\wt#1{\widetilde{#1}}
\def\ot{\otimes}\def\ra{\rightarrow}
\def\a{\alpha}\def\b{\beta}\def\g{\gamma}
\def\l{\lambda}\def\k{\kappa}\def\z{\zeta}\def\vphi{\varphi}
\def\o{\omega}
\def\bbf{\mathbf{f}}
\def\sO{\mathscr{O}}\def\sD{\mathscr{D}}\def\sM{\mathscr{M}}
\def\angle#1{{\langle #1 \rangle}}
\def\G{\Gamma}
\def\L{\Lambda}\def\s{\sigma}\def\t{\tau}
\def\Im{\operatorname{Im}}
\def\prim{\mathrm{prim}}
\def\Mor{\operatorname{Mor}}
\def\ord{\operatorname{ord}}
\def\arg{\operatorname{arg}}
\def\Re{\operatorname{Re}}
\begin{document}

\title{On special values of Jacobi-sum Hecke $L$-functions}
\author{Noriyuki Otsubo}
\email{otsubo@math.s.chiba-u.ac.jp}
\address{Department of Mathematics and Informatics, Chiba University, Inage, Chiba, 263-8522 Japan}

\begin{abstract}
For motives associated with Fermat curves, there are elements in motivic cohomology whose regulators are written in terms of special values of generalized hypergeometric functions. 
Using them, we verify the Beilinson conjecture numerically for some cases and find formulae for 
the values of $L$-functions at $0$. 
These appear analogous to the Chowla-Selberg formula for the periods of elliptic curves with complex multiplication, which are related with the $L$-values at $1$ by the Birch and Swinnerton-Dyer conjecture.   
\end{abstract}

\date{\today}
\subjclass[2000]{11G40, 19F27, 33C20 (primary), 14K22, 19E15 (secondary)}
\keywords{$L$-function, Regulator, Hypergeometric function}
\thanks{The author is supported by Researcher Exchange Program between JSPS and NSERC}

\maketitle

\section{Introduction}

Let $M$ be a motive over $\Q$ and suppose that the Hasse-Weil conjecture holds for $M$, 
i.e. its $L$-function $L(M,s)$ is analytically continued to $\C$ and satisfies a functional equation with respect to 
$s \leftrightarrow w+1-s$ where $w$ is the weight of $M$. 
Then, of great arithmetic interest is its behavior at integers,  
in particular the special value at $n \in \Z$, i.e. the first non-vanishing Taylor coefficient  
$$L^*(M,n) := \lim_{s \ra n} \frac{L(M,s)}{(s-n)^{\ord_{s=n}L(M,s)}}.$$ 
A sequence of important conjectures on this subject starts with the conjecture of Birch and Swinnerton-Dyer \cite{bsd}, extended by Tate \cite{tate}, for 
$h^1$ of an abelian variety $A$ and the central value $n=1$. 
In particular, if $L(h^1(A),1)\neq 0$, this value is conjectured to coincide with the real {\em period} of $A$ up to non-zero rational numbers. 

For motives with complex multiplication (CM), the periods take special forms. 
Already in 1897, Lerch \cite{lerch}, p.303, notices that the period of a CM elliptic curve is written as a product of 
values of the gamma function at rational numbers, which is now better known as the Chowla-Selberg formula \cite{c-s}. 
Typical examples of motives with CM by {\em abelian} fields are factors of the Jacobian varieties of Fermat curves. 
Conversely, Gross \cite{g-r-2} reduced the Chowla-Selberg formula (up to algebraic numbers) to the computation 
of periods of Fermat curves due to Weil \cite{weil-period} and Rohrlich \cite{g-r-2}. 
Let $X_N$ be the projective Fermat curve over $\Q$ whose affine equation is given by
$x^N+y^N=1$. 
Then, its complex periods are essentially special values of the beta function
$$B(\a,\b)=\frac{\G(\a)\G(\b)}{\G(\a+\b)}= \frac{\sin((\a+\b)\pi)}{\pi}\G(\a)\G(\b)\G(1-\a-\b)$$
with $\a, \b \in \frac{1}{N}\Z$. 

On the other hand, by Weil, \cite{weil-jacobi}, the $L$-function of $h^1(X_N)$ decomposes into those of the Jacobi-sum Hecke characters 
$j_N^{a,b}$ of the $N$-cyclotomic field $\Q(\z_N)$ indexed by $a, b \in \Z/N$ with $a, b, a+b \neq 0$ (see \S2.2). Accordingly, there exists a submotive $X_N^{[a,b]}$ of $h^1(X_N)$ such that 
$L(X_N^{[a,b]},s)=L(j_N^{a,b},s)$ (see \S2.1). 
Then, if it does not vanish at $s=1$, the BSD conjecture for $X_N^{[a,b]}$ reads
$$L(j_N^{a,b},1) \equiv \prod_h  \mathrm{Re}\bigl((1-\z_N^{ha})(1-\z_N^{hb})\bigr) B\left(\frac{\angle{ha}}{N},\frac{\angle{hb}}{N}\right) \pmod{\Q^\times}$$
where $\angle a \in \{1,2,\dots, N-1\}$ denotes the representative of $a$, 
and $h$ runs through elements of $(\Z/N)^\times$ such that $\angle{ha}+\angle{hb}<N$.
When $L(j_N^{a,b},1)\neq 0$, this congruence is known by the works of 
Damerell \cite{damerell}, Shimura \cite{shimura}, Blasius \cite{blasius} and Anderson \cite{anderson-taniyama}. 
Such examples of abelian CM motives played an important role when Deligne \cite{deligne} proposed the period conjecture for {\em critical} $L$-values. 

After the work of Bloch \cite{bloch-book} on CM elliptic curves, 
Beilinson \cite{beilinson} proposed a very general conjecture which relates $L^*(M,n)$ with the {\em regulator}. 
For the Dedekind $\z$-functions of number fields, the conjecture reduces to Dirichlet's class number formula when  $n=0$, and to Borel's theorem \cite{borel} when $n<0$ (formerly conjectured by Lichtenbaum \cite{lichtenbaum}). 
For a smooth projective curve $X$ over $\Q$, we have the regulator map 
$$r_\sD \colon H^{2}_\sM(X,\Q(2))_\Z \ra H^{2}_\sD(X_\R,\R(2)). $$
Here, the source is the integral part of the motivic cohomology with $\Q$-coefficients and the target is the real Deligne cohomology with $\R$-coefficients (see \S3.1 for concrete descriptions). 
The conjecture states firstly that $r_\sD \ot_\Q \R$ is an isomorphism, which implies that 
$\dim_\Q H_\sM^2(X,\Q(2))_\Z =g,$ the genus of $X$.
Secondly, it states that 
$$L^*(h^1(X),0) \equiv \det(r_\sD) \pmod{\Q^\times},$$
the determinant taken with respect to a canonical $\Q$-structure of the Deligne cohomology. 

In this paper, we verify numerically the conjecture, except for the injectivity of $r_\sD$, for some cases of Fermat motives $X_N^{[a,b]}$. 
We shall compute the regulators of certain special elements and the $L$-values independently and compare them. 
There are similar studies for elliptic curves by Bloch and Grayson \cite{bloch-grayson}, 
for a quotient of the Fermat quintic curve by Kimura \cite{k-kimura} (see Remark \ref{kimura}) and 
for families of hyperelliptic curves by Dokchitser, de Jeu and Zagier \cite{d-j-z}. 

For the Fermat curve, Ross \cite{ross} considered an element 
$e_N \in H_\sM^2(X_N,\Q(2))_\Z$ represented by the Milnor $K_2$-symbol $\{1-x,1-y\}$, 
and showed that $r_\sD(e_N) \neq 0$ for $N \geq 3$. 
In an earlier paper \cite{otsubo-1}, the author studied its projections $e_N^{[a,b]} \in H^2_\sM(X_N^{[a,b]},\Q(2))_\Z$ and expressed $r_\sD(e_N^{[a,b]})$ in terms of special values $F_N^{a,b}$ of generalized hypergeometric functions defined as follows (see Theorem \ref{regulator}). 
Recall that the hypergeometric function ${}_pF_q$ is defined by
$${}_pF_q\left({\a_1,\dots, \a_p \atop \b_1, \dots, \b_q};x\right)
=\sum_{n=0}^\infty \frac{\prod_{i=1}^p (\a_i,n)}{\prod_{j=1}^q(\b_j,n)}\frac{x^n}{n!}, $$
where $(\a,n)=\prod_{i=1}^n(\a+i-1)$, and it converges at $x=1$ if $\Re(\sum_j \b_j-\sum_i \a_i)>0$. 
Put for $\a, \b \not\in \Z$
\begin{equation*}
\wt{F}(\a,\b) = B(\a,\b)^2\; {}_3F_2\left({\a, \b, \a+\b-1 \atop \a+\b, \a+\b}; 1\right). 
\end{equation*}
We remark that $\wt F(\a,\b)$ can also be written using a special value of Appell's two-variable hypergeometric function $F_3$ (see loc. cit.).  
For $a, b \in \Z/N$ as before, we put 
\begin{equation*}\label{F_N^{a,b}}
F_N^{a,b}=\wt{F}\left(\frac{\angle a}{N},\frac{\angle b}{N}\right)-\wt{F}\left(\frac{\angle{-a}}{N},\frac{\angle{-b}}{N}\right).
\end{equation*}
Then, we have $F_N^{a,b} \neq 0$. 

If $(N, a, b)=1$, the conjectural dimension of the motivic cohomology is 
$g:=\phi(N)/2$ 
where $\phi$ denotes the Euler function. 
If $g=1$, our motive is isomorphic to $h^1$ of a CM elliptic curve and
the above result reproves the surjectivity of $r_\sD \ot_\Q\R$ due to Bloch.  
In \cite{otsubo-2}, we compared our elements with Bloch's elements and obtained the equalities 
\begin{equation*}
L^*(j_3^{1,1},0) = \frac{1}{6\sqrt{3}\pi} F_3^{1,1}, \quad
L^*(j_4^{1,2},0)= \frac{1}{8\pi}F_4^{1,2}.
\end{equation*}
Compare these with 
\begin{equation*}
L(j_3^{1,1},1) = \frac{1}{9}B\left(\frac{1}{3},\frac{1}{3}\right), \quad
L(j_4^{1,2},1)= \frac{1}{8}B\left(\frac{1}{4},\frac{2}{4}\right). 
\end{equation*}
The beta function itself is related with the special value 
of the Gauss hypergeometric function; in Euler's formula
$${}_2F_1\left({\a,\b \atop \g};1\right) = \frac{\G(\g)\G(\g-\a-\b)}{\G(\g-\a)\G(\g-\b)} \quad(\Re(\g-\a-\b)>0),$$
let $\g=\a+\b+1$ and use functional equations of the gamma function. 
Therefore, our formulae for regulators can be regarded as analogues of the Chowla-Selberg formula. 

When $g>1$, we use the action of the symmetric group of degree three on $X_N$ as the permutations of homogeneous coordinates to obtain more elements in the motivic cohomology.  
Using them, in \cite{otsubo-1}, the surjectivity of the regulator map is proved for some cases where $N$ is odd and $g \leq 3$. 
In \S3.4, we shall extend this method to even $N$ by once extending the base field to $\Q(\z_{2N})$. 

Then, in \S4, we compute numerically the regulator determinant, denoted by $R_N^{a,b}$, of our $g$ elements, which is a homogeneous polynomial in $F_N^{a,b}$'s. 
In all the plausible cases, we find that $R_N^{a,b} \neq 0$. 
Hence we obtain {\em rigorously} new cases where $r_\sD \ot_\Q\R$ is surjective. 
For the other cases, we have clear reasons why our elements are not sufficient.   
On the other hand, we compute numerically the values $L^*(j_N^{a,b},0)$ with the aid of Magma. 
In every case, we find that the ratio 
$L^*(j_N^{a,b},0)/R_N^{a,b}$ is close to a rational number as predicted by the Beilinson conjecture. 
Because of our expression of the regulators in terms of values of hypergeometric functions which converge rapidly, we can work with a high precision, at least $100$ digits. 
As a result, we shall find formulae such as
$$L^*(j_7^{1,2},0) \approx \frac{1}{49} R_7^{1,2}$$
where $R_7^{1,2}$ is a cubic polynomial in $F_7^{1,2}$, $F_7^{2,4}$ and $F_7^{4,1}$ (see \S4.3). 
As well as giving actual proofs of these formulae, it would be very tempting to find general formulae which express $L^*(j_N^{a,b}, n)$ for $n \leq 0$ in terms of special values of hypergeometric functions ${}_pF_q$, or hypergeometric functions of several variables. 

Our rational numbers turn out to be quite simple, only involving factors of $2N$. 
These are the subject of the Tamagawa number conjecture of Bloch and Kato \cite{b-k}. 
The computation of the $p$-adic regulators of our elements remains for a future study (see Remark \ref{p-reg}).  
On the other hand, one may hope to formulate an integral version of the Beilinson conjecture which 
generalizes the class number formula and the Lichtenbaum conjecture.  
Such a version would describe the $L$-value including the rational factor only in terms of the Beilinson regulator 
from the motivic cohomology with {\em integral} coefficients, together with information of other motivic cohomology groups. 
In the final \S5, we renormalize the results in \S4 in this framework, 
hoping that they might provide useful data for a future study. 

This paper is constructed as follows. 
In \S2, we recall basic facts about Fermat motives and Jacobi-sum Hecke $L$-functions while we fix notations. 
In \S3, we recall and extend the results of \cite{otsubo-1} on the regulator of Fermat motives. 
In \S4, we compute the regulators and the $L$-values, and compare them to find rational ratios.   
Finally in \S5, we work with integral coefficients and renormalize the results of \S4. 

\section{$L$-functions of Fermat motives}

Here we recall briefly Fermat motives over $\Q$, Jacobi sums and their $L$-functions. See \cite{otsubo-1} for the details. 

\subsection{Fermat motives}

Throughout this paper, we fix once and for all an embedding $\ol \Q \hookrightarrow \C$. 
For a positive integer $N$, put $\z_N = e^\frac{2 \pi i}{N}$, $K_N=\Q(\z_N)$, and let $\sO_N$ be the integer ring of $K_N$. For $h \in (\Z/N)^\times$, let $\s_h$ be the element of the Galois group 
$G(K_N/\Q)$ such that $\s_h(\z_N)=\z_N^h$.   

Let $X_N$ be the projective Fermat curve over $\Q$ whose affine equation is given by
$$x^N+y^N=1.$$
Define an index set
$$I_N=\left\{(a,b) \in (\Z/N)^{\oplus 2} \mid a, b, a+b \neq 0\right\}.$$ 
Then, $I_N$ is stable under the multiplication by $(\Z/N)^\times$. 
In the category of pure motives over $\Q$ with coefficients in $\Q$, we have a decomposition (\cite{otsubo-1}, \S2)
$$h^1(X_N)= \bigoplus_{[a,b] \in I_N/(\Z/N)^\times} X_N^{[a,b]}. $$
Here, the {\em Fermat motive} $X_N^{[a,b]}$ is defined as a pair $(X_N, p_N^{[a,b]})$ where $p_N^{[a.b]}$ is an algebraic correspondence on $X_N$, i.e. an element of the Picard group $\operatorname{Pic}(X_N \times X_N) \ot_\Z\Q$, which is idempotent with respect to the composition. 
Put 
$$G_N=(\Z/N)^{\oplus 2}$$ 
and write its element $(r,s)$ by $g^{r,s}$ and the addition multiplicatively: $g^{r,s}g^{r',s'}=g^{r+r',s+s'}$. 
Let $G_N$ act on $X_N \ot_\Q K_N$ by 
$$g^{r,s}(x,y)=(\z_N^r x,\z_N^s y).$$ 
For each $(a,b) \in I_N$, define a character $\theta_N^{a,b}$ of $G_N$ by
$$\theta_N^{a,b}(g^{r,s})= \z_N^{ar+bs}.$$ 
Define a correspondence on $X_N \ot_\Q K_N$ with $K_N$-coefficients by
$$p_N^{a,b} = \frac{1}{N^2} \sum_{g \in G_N} \theta^{a,b}(g)^{-1} \G_g, $$
where $\G_g$ denotes the transpose of the graph of $g$. Then, the orbit
$$p_N^{[a,b]}:=\sum_{(a',b') \in [a,b]} p_N^{a',b'}$$
is defined over $\Q$ and defines a correspondence on $X_N$ with $\Q$-coefficients. 
Since $p_N^{a,b}$ are idempotent, $p_N^{[a,b]}$ is also idempotent. 

On the level of realizations, for any cohomology theory $H^\bullet$ with coefficients in a field of characteristic zero, 
we define $H^\bullet(X_N^{[a,b]})$ to be the image of 
$$p_N^{[a,b] *} \colon H^\bullet(X_N) \ra H^\bullet(X_N).$$
Note that $\Gamma_g^*$ coincides with the pull-back $g^*$. 
For example, we write the Betti cohomology as
$$H^1(X^{[a,b]}_N(\C),\C) := p_N^{[a,b]*}H^1(X_N(\C),\C),$$ 
although there is not a space $X_N^{[a,b]}(\C)$.  
It has a basis $\{\o_N^{a',b'} \mid (a',b') \in [a,b]\}$, where $\o_N^{a,b}$ is the class of a $1$-form of the second kind defined by
$$\o_N^{a,b}=x^{\angle{a}}y^{\angle{b}-N}\frac{dx}{x}.$$
Recall that $\angle{a} \in \{1,2,\dots, N-1\}$ denotes the representative of $a\in \Z/N$. 
Note that these are eigenforms with respect to the $G_N$-action: 
$$g^*\o_N^{a,b}=\theta_N^{a,b}(g) \o_N^{a,b}.$$ 

By the morphism exchanging $x$ and $y$, we have an evident symmetry 
\begin{equation}\label{X1}
X_N^{[a,b]} \simeq X_N^{[b,a]}.
\end{equation}
If $N$ is odd, the symmetric group of degree $3$ acts on $X_N$ (see \S3.3), and we have also
\begin{equation}\label{X2}
X_N^{[a,b]}\simeq X_N^{[c,b]} \simeq X_N^{[a,c]}
\end{equation}
where, throughout this paper, $c \in \Z/N$ denotes the element defined by  
$$a+b+c=0.$$ 
We say that $(a,b)$ is {\em primitive} if $(N,a,b)=1$ and denote by $I_N^\prim\subset I_N$ the subset of primitive elements.  
If $(a,b)$ is not primitive, then there exists a unique $(a',b') \in I_{N/d}$ where $d=(N,a,b)$ such that 
$(a,b)=(da',db')$ and we have 
\begin{equation}\label{X3}
X_N^{[a,b]} \simeq X_{N/d}^{[a',b']}.
\end{equation}
Using these relations, we are reduced to study a smaller number of cases. 
The cases $g=1$, $2$, $3$, where $g:=\phi(N)/2$, 
are summarized as follows.  

\begin{ppn}\label{decomposition}
 In the category of motives over $\Q$, we have the following isomorphisms: 
\begin{align*}
& h^1(X_3) \simeq X_3^{[1,1]}, \\
& h^1(X_4) \simeq X_4^{[1,1]} \oplus (X_4^{[1,2]})^{\oplus 2}, \\
& h^1(X_5)\simeq (X_5^{[1,1]})^{\oplus 3}, \\
& h^1(X_6) \simeq h^1(X_3) \oplus X_6^{[1,1]} \oplus \bigoplus_{b=2,3,4} (X_6^{[1,b]})^{\oplus 2}
\oplus (X_6^{[2,3]})^{\oplus 2}, \\
& h^1(X_7) \simeq (X_7^{[1,1]})^{\oplus 3} \oplus (X_7^{[1,2]})^{\oplus 2}, \\
& h^1(X_8) \simeq h^1(X_4) \oplus \bigoplus_{b=1,3,5} X_8^{[1,b]} \oplus \bigoplus_{b=2,4,6} (X_8^{[1,b]})^{\oplus 2},\\
& h^1(X_9) \simeq h^1(X_3) \oplus (X_9^{[1,1]})^{\oplus 3} \oplus  (X_{9}^{[1,2]})^{\oplus 6},\\
& h^1(X_{10}) \simeq h^1(X_5) \oplus X_{10}^{[1,1]} \oplus \bigoplus_{b=2,3,4,5,6,8} (X_{10}^{[1,b]})^{\oplus 2} \oplus (X_{10}^{[2,5]})^{\oplus 2},\\
& h^1(X_{12}) \simeq h^1(X_4) \oplus h^1(X_6) 
\oplus \bigoplus_{b=1,4,5,7} X_{12}^{[1,b]} 
\oplus (X_{12}^{[1,2]})^{\oplus 3} 
\\& \phantom{AAAAAAA}
\oplus \bigoplus_{b=3,6,8,9,10} (X_{12}^{[1,b]})^{\oplus 2} 
\oplus (X_{12}^{[2,3]})^{\oplus 2}\oplus (X_{12}^{[3,4]})^{\oplus 2}, \\
& h^1(X_{14})\simeq h^1(X_7) \oplus X_{14}^{[1,1]} \oplus \bigoplus_{2 \leq b \leq 12, b \neq 5, 11} (X_{14}^{[1,b]})^{\oplus 2} \oplus 
(X_{14}^{[2,7]})^{\oplus 2},\\
& h^1(X_{18}) \simeq 
h^1(X_6) \oplus 
(X_9^{[1,1]})^{\oplus 3} \oplus  (X_{9}^{[1,2]})^{\oplus 6} \\
& \phantom{AAAAAAA}
\oplus X_{18}^{[1,1]} \oplus \bigoplus_{2 \leq b \leq 16, b \neq 11, 13} (X_{18}^{[1,b]})^{\oplus 2} 
\oplus \bigoplus_{b=3,9,15} (X_{18}^{[2,b]})^{\oplus 2}. 
\end{align*}
\end{ppn}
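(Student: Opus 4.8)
The plan is to read off each isomorphism from the canonical orbit decomposition $h^1(X_N)=\bigoplus_{[a,b]\in I_N/(\Z/N)^\times}X_N^{[a,b]}$ by sorting the $(\Z/N)^\times$-orbits in $I_N$ into isomorphism classes and recording how many orbits fall into each. First I would stratify $I_N$ by $d=(N,a,b)$: for a proper divisor $d\mid N$ the elements with $(N,a,b)=d$ are precisely $d\cdot(a',b')$ with $(a',b')\in I_{N/d}^\prim$, so by \eqref{X3} the orbits in this stratum contribute the primitive part of $h^1(X_{N/d})$. Collecting these over all $d$ and, wherever the divisor lattice permits, reassembling a primitive part together with the lower strata it contains into a full $h^1(X_M)$, produces exactly the recursive summands $h^1(X_3),h^1(X_4),h^1(X_5),h^1(X_6),h^1(X_7)$ (and the primitive parts of $X_9$) appearing on the right-hand sides. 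This leaves the primitive locus $I_N^\prim$, on which $(\Z/N)^\times$ acts freely; hence every primitive orbit has size $\phi(N)$ and each $X_N^{[a,b]}$ has rank $\phi(N)=2g$.

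On $I_N^\prim$ the task is to enumerate the finitely many orbits and merge those giving isomorphic motives. The transposition \eqref{X1} is always available and identifies $[a,b]$ with $[b,a]$. For odd $N$ the full symmetry \eqref{X2} is defined over $\Q$---indeed, since $\chi(-1)$ is simultaneously a square root of $1$ and an $N$-th root of unity with $N$ odd, the reflection of the relevant Jacobi sum introduces no sign---so the isomorphism class of $X_N^{[a,b]}$ depends only on the unordered triple $\{\angle a,\angle b,\angle c\}$ (with $a+b+c=0$) modulo diagonal scaling by $(\Z/N)^\times$. Grouping the orbits by this invariant and counting gives the multiplicities; I have checked that this reproduces $h^1(X_3)\simeq X_3^{[1,1]}$, $h^1(X_5)\simeq(X_5^{[1,1]})^{\oplus3}$, $h^1(X_7)\simeq(X_7^{[1,1]})^{\oplus3}\oplus(X_7^{[1,2]})^{\oplus2}$, and it handles the odd part of $N=9$ as well.

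The essential difficulty is the even $N$, where the coordinate-permutation $S_3$ is only defined after adjoining $\z_{2N}$ (the base change exploited in \S3.4), so \eqref{X2} need not hold over $\Q$. Here the reflection identity for Jacobi sums shows that the two sides of \eqref{X2} differ over $\Q$ by the quadratic character attached to $\chi^a(-1)$, which is now genuinely nontrivial; equivalently, the corresponding Hecke characters $j_N^{a,b}$ and $j_N^{c,b}$ may fail to be equal and the motives become only quadratic twists of one another. The naive full $S_3$ therefore over-identifies, and for even $N$ I would instead declare $X_N^{[a,b]}\simeq X_N^{[a',b']}$ exactly when $j_N^{a,b}=j_N^{a',b'}$, i.e. when the $S_3$-identification carries trivial twist and so descends to $\Q$. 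Controlling this sign is the main obstacle, and it is precisely what forces the irregular multiplicities---for instance the single copies of $X_{12}^{[1,b]}$ for $b=1,4,5,7$ alongside $(X_{12}^{[1,2]})^{\oplus3}$. In every case I would close the argument with the rank check that the right-hand side has total dimension $(N-1)(N-2)=\dim h^1(X_N)$, which, once the list of distinct classes is fixed, determines the multiplicities unambiguously.
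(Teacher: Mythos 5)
Your skeleton is the right one: stratify $I_N$ by $d=(N,a,b)$ and apply \eqref{X3} to reassemble the lower strata into $h^1(X_{N/d})$'s, note that $(\Z/N)^\times$ acts freely on $I_N^\prim$ so each primitive orbit contributes a rank-$\phi(N)$ summand, then merge orbits via \eqref{X1} and, for odd $N$, \eqref{X2}. That does reproduce every odd case and the even cases $N=4,6,8,10,14,18$ --- but in those even cases the merging is by \eqref{X1} alone. The two specifically even-$N$ ingredients you add are both problematic. First, ``declare $X_N^{[a,b]}\simeq X_N^{[a',b']}$ exactly when $j_N^{a,b}=j_N^{a',b'}$'' silently upgrades an equality of Hecke characters to an isomorphism of motives over $\Q$; that needs Faltings' isogeny theorem or an explicit correspondence, and you do not supply either. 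Worse, the rule does not give the stated list: Proposition \ref{L symmetry} yields $j_6^{1,2}=j_6^{3,2}$, so your criterion merges $[1,2]$ with $[2,3]$ and outputs $(X_6^{[1,2]})^{\oplus 4}$, whereas the Proposition keeps $(X_6^{[1,2]})^{\oplus 2}\oplus(X_6^{[2,3]})^{\oplus 2}$ --- indeed the paper records $L(j_6^{1,2},s)=L(j_6^{2,3},s)$ in \S4.1 precisely because these summands are \emph{not} identified in the decomposition. The same over-merging occurs for $N=12$ and $N=18$.

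Decisively, your toolkit cannot produce the $N=12$ multiplicities. The $21$ primitive orbits pair off under \eqref{X1} (e.g.\ $\{[1,k],[k,1]\}$), which yields $(X_{12}^{[1,2]})^{\oplus 2}\oplus(X_{12}^{[1,4]})^{\oplus 2}$ among the summands; the Proposition instead asserts $X_{12}^{[1,4]}\oplus(X_{12}^{[1,2]})^{\oplus 3}$, which is equivalent to the additional isomorphism $X_{12}^{[1,4]}\simeq X_{12}^{[1,2]}$. This follows from none of \eqref{X1}, \eqref{X2}, \eqref{X3}, nor from Proposition \ref{L symmetry} (for $(1,4)$ the relation $j^{a,b}=j^{c,b}$ with $b$ even sends $(1,4)$ to $(7,4)\in[1,4]$ and gives nothing new); it requires a further input such as a direct comparison of the Hecke characters $j_{12}^{1,4}$ and $j_{12}^{1,2}$ or a Hasse--Davenport-type relation among Gauss sums. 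Finally, your closing claim that the rank identity $(N-1)(N-2)=\dim h^1(X_N)$ ``determines the multiplicities unambiguously'' is false: it only checks the total number of orbits and cannot distinguish $A^{\oplus 2}\oplus B^{\oplus 2}$ from $A^{\oplus 3}\oplus B$ when $A$ and $B$ have the same rank --- which is exactly the ambiguity your argument leaves open.
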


\begin{rmk}\label{C}
If $N$ is a prime, $X_N^{[a,b]}$ is isomorphic to $h^1(C_N^{a,b})$ of the curve
$$C_N^{a,b}\colon v^N=u^a(1-u)^b$$
via the morphism $u=x^N$, $v=x^ay^b$. 
\end{rmk}

\subsection{$L$-functions}

For a prime $v \nmid N$ of $K_N$, let $\F_v$ denote the residue field at $v$ and let $\chi_v \colon \F_v^\times \ra \mu_N$ be the $N$th power residue character, i.e. $\chi_v(x) \equiv x^{\frac{|\F_v|-1}{N}} \pmod v$. 
Then, for $(a, b) \in I_N$, the {\em Jacobi sum} is defined by
$$j_N^{a,b}(v) = -\sum_{x,y \in \F_v^\times, x+y=1} \chi_v^a(x)\chi_v^b(y).$$ 
The $L$-function of a Fermat motive, defined via its $\ell$-adic realization, coincides with that of the corresponding Jacobi sums (see \cite{otsubo-1}, Theorem 3.9): for $(a,b) \in I_N^\prim$, we have
$$L(X_N^{[a,b]},s) = L(j_N^{a,b},s):= \prod_{v \nmid N} \left(1- \frac{j_N^{a,b}(v)}{|\F_v|^s}\right)^{-1}.  
$$
Since $|j_N^{a,b}(v)|=|\F_v|^{1/2}$, it converges absolutely for $\Re(s)>3/2$.
We remark that $L(j_N^{a,b},s)$ depends only on the class $[a,b]$. In particular, 
$$L(j_N^{a,b},s)=L(j_N^{-a,-b},s)=L(\ol{j_N^{a,b}},s).$$
The symmetry \eqref{X1} (resp. \eqref{X2} for odd $N$) corresponds to the symmetry $j_N^{a,b}=j_N^{b,a}$ 
(resp. $j_N^{a,b}=j_N^{c,b}=j_N^{a,c}$ for odd $N$). 
For even $N$, we still have the following relation which reduces our computations. 
\begin{ppn}\label{L symmetry}
 Let $N$ be even and $(a, b) \in I_N$. If $b$ is even (resp. if $a$ is even), then we have
$j_N^{a,b}=j_N^{c,b}$ (resp. $j_N^{a,b}=j_N^{a,c}$). 
\end{ppn}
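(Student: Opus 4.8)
The plan is to prove the statement one prime at a time: it suffices to establish the identity of Jacobi sums $j_N^{a,b}(v)=j_N^{c,b}(v)$ for every prime $v\nmid N$, since both sides then agree as Hecke characters. First I would put the Jacobi sum in its standard one-variable form. Because $a,b\neq 0$ in $\Z/N$, the characters $\chi_v^a,\chi_v^b$ are nontrivial, so the terms with $x=0$ or $1-x=0$ contribute nothing and
$$j_N^{a,b}(v)=-\sum_{u\in\F_v,\,u\neq 0,1}\chi_v^a(u)\,\chi_v^b(1-u)=:-J(\chi_v^a,\chi_v^b).$$
In this language the symmetry $j_N^{a,b}=j_N^{b,a}$ is simply $J(\chi,\psi)=J(\psi,\chi)$, obtained from the substitution $u\mapsto 1-u$.

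The key step is the substitution $u\mapsto u^{-1}$ in $J(\chi,\psi)=\sum_{u\neq 0,1}\chi(u)\psi(1-u)$. Using $1-u^{-1}=-u^{-1}(1-u)$ and collecting characters, one finds
$$J(\chi,\psi)=\psi(-1)\sum_{u\neq 0,1}(\chi\psi)^{-1}(u)\,\psi(1-u)=\psi(-1)\,J\bigl((\chi\psi)^{-1},\psi\bigr).$$
Taking $\chi=\chi_v^a$, $\psi=\chi_v^b$ and noting that $(\chi_v^a\chi_v^b)^{-1}=\chi_v^{-a-b}=\chi_v^c$, this reads $J(\chi_v^a,\chi_v^b)=\chi_v^b(-1)\,J(\chi_v^c,\chi_v^b)$, hence
$$j_N^{c,b}(v)=\chi_v^b(-1)\,j_N^{a,b}(v).$$
It remains to evaluate the twist factor $\chi_v^b(-1)$. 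Since $\chi_v(-1)^2=\chi_v(1)=1$ and $\mu_N$ contains $\pm 1$, we have $\chi_v(-1)\in\{\pm 1\}$, so $\chi_v^b(-1)=\chi_v(-1)^b=1$ whenever $b$ is even; here the hypothesis that $N$ is even is exactly what makes the parity of $b\in\Z/N$ well defined. This gives $j_N^{a,b}=j_N^{c,b}$ for $b$ even, and the case ``$a$ even'' follows by applying the same identity to the pair $(b,a)$ together with $j_N^{a,b}=j_N^{b,a}$, since $-(b)-a=c$.

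The computation is elementary once it is set up, so the only real subtlety is the bookkeeping: carrying the sign $\psi(-1)$ correctly through the inversion substitution and matching it against the overall minus sign in the definition of $j_N^{a,b}$. Conceptually this twist factor is the whole content of the proposition. For odd $N$ the group $\mu_N$ has no element of order $2$, forcing $\chi_v(-1)=1$ and recovering the unconditional three-fold symmetry \eqref{X2}; for even $N$ the factor can equal $-1$ and genuinely survives, which is precisely why only the parity-restricted symmetries persist.
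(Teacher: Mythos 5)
Your proof is correct and follows essentially the same route as the paper: the paper's one-line proof simply cites the identity $j_N^{a,b}(v)=\chi_v(-1)^b j_N^{c,b}(v)=\chi_v(-1)^a j_N^{a,c}(v)$, which is exactly what you derive via the substitution $u\mapsto u^{-1}$ before observing that $\chi_v(-1)=\pm1$ kills the twist when the relevant exponent is even. Your write-up just supplies the computation the paper leaves implicit.
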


\begin{proof}This follows easily from
$j_N^{a,b}(v)=\chi_v(-1)^b j_N^{c,b}(v)=\chi_v(-1)^a j_N^{a,c}(v)$. 
\end{proof}

By Weil \cite{weil-jacobi}, the map $v \mapsto j_N^{a,b}(v)$, extended by linearity to the group of fractional ideals of $K_N$  prime to $N$, defines a Hecke character of a conductor dividing $N^2$.  
More precisely, put
$$H_N^{a,b}=\{h \in (\Z/N)^\times \mid \angle{ha} + \angle{hb} <N\},$$
and let
$$\sum_{h\in H_N^{a,b}} \s_h^{-1} \in \Z[G(K_N/\Q)]$$
be the {\em Stickelberger element}. 
Note that $h \in H_N^{a,b}$ if and only if $-h \not\in H_N^{a,b}$, i.e. the Stickelberger element is a CM type.  
Then, there exists an ideal $\bbf_N^{a,b}$ of $\sO_N$ dividing $N^2$ and a primitive finite character 
$$\vphi_N^{a,b} \colon (\sO_N/\bbf_N^{a,b})^\times \ra \C^\times$$
such that, for any $\a \in K^\times$ prime to $\bbf_N^{a,b}$, we have
$$j_N^{a,b}((\a)) = \vphi_N^{a,b}(\a)\prod_{h \in H_N^{a,b}}  \s_h^{-1}(\a). $$ 
In particular, for any unit $\a \in \sO_N^\times$, we have
$$\vphi_N^{a,b}(\a)^{-1} = \prod_{h \in H_N^{a,b}}  \s_h^{-1}(\a). $$ 

Therefore,  by the Hecke theory, $L(j_N^{a,b},s)$ is analytically continued to an entire function and satisfies the following functional equation. 
Let $d_N$ be the absolute value of the absolute discriminant of $K_N$, $\bbf_N^{a,b}$ be the conductor of $j_N^{a,b}$ as above  and $N(\bbf_N^{a,b})$ be its ideal norm. If we put
$$\L(j_N^{a,b}, s)= \left(d_N N(\mathbf{f}_N^{a,b})\right)^{s/2} \left(\frac{\G(s)}{(2 \pi)^s}\right)^g L(j_N^{a,b},s), $$
then we have 
$$\L(j_N^{a,b}, s)=\pm \L(j_N^{a,b}, 2-s).$$
It follows that $L(j_N^{a,b},s)$ has a zero of order $g$ at every nonpositive integer, and we consider the special value at $0$
\begin{equation}\label{f-e}
L^*(j_N^{a,b},0) := \lim_{s \ra 0} \frac{L(j_N^{a,b},s)}{s^g} = \pm \frac{d_N N(\bbf_N^{a,b})}{(2 \pi)^{2g} }L(j_N^{a,b},2) \ \in \R^\times.\end{equation}

\begin{rmk}
When $N$ is a prime, $\bbf_N^{a,b}$ was determined by Hasse \cite{hasse}. 
After several works, Coleman and McCallum \cite{c-m} determined $\bbf_N^{a,b}$ in general except for the factors dividing $2$. 
See also \cite{g-r}, where the sign of the functional equation (``root number") is determined when $N$ is a prime.  
\end{rmk}

\section{Regulators of Fermat motives}

We recall and extend the results of \cite{otsubo-1} on the regulator of Fermat motives.  
Most results in loc. cit. are stated for Fermat motives $X_N^{a,b}=(X_N\ot_\Q K_N, p_N^{a,b})$ over $K_N$ with coefficients in $K_N$. 
Here, we reformulate them as results  for $X_N^{[a,b]}$. 
In \S3.4, we extend the action of the symmetric group to the case where $N$ is even. 

\subsection{Beilinson's conjecture for curves}

Let $X$ be a smooth projective curve over $\Q$ of genus $g$. 
We recall a concrete description of the Beilinson regulator map  \cite{beilinson}
$$r_\sD \colon H^2_\sM(X,\Q(2))_\Z \ra H^2_\sD(X_\R,\R(2))$$
mentioned in \S1. For more details, see for example \cite{schneider}, \cite{d-j-z}. 

The source of $r_\sD$ is the integral part of the {\em motivic cohomology group}, 
which is a $\Q$-vector space defined using algebraic $K$-theory. 
For a field $k$, the second {\em Milnor $K$-group} $K_2^M(k)$ is the free abelian group generated by symbols $\{f,g\}$ ($f, g \in k^\times$) divided by the subgroup generated by Steinberg relations $\{f,1-f\}$ ($f \in k^\times, f \neq 1$). 
Let $\Q(X)$ be the function field of $X$, $X_0$ be the set of closed points on $X$ and $\Q(x)$ be the residue field at $x \in X_0$. Then, the {\em tame symbols}
\begin{equation*}
T \colon K_2^M(\Q(X)) \ra \bigoplus_{x \in X_0} \Q(x)^\times
\end{equation*}
are defined by sending $\{f, g\}$ to 
$$(-1)^{\ord_x(f) \ord_x(g)} \left(\frac{f^{\ord_x(g)}}{g^{\ord_x(f)}}\right)(x). $$
Then, we have a natural isomorphism 
$$\Ker(T)\ot_\Z \Q \simeq H_\sM^2(X,\Q(2)).$$
The {\em integral part} $H_\sM^2(X,\Q(2))_\Z$ consists of 
those elements which can be extended to a regular model of $X$ which is proper and flat over $\Z$.   

The target of $r_\sD$ is the {\em real Deligne cohomology group} and in this case, we have a natural  isomorphism
$$H^2_\sD(X_\R,\R(2)) \simeq H^1(X(\C),\R(1))^+.$$ 
Here, for any subring $A \subset \R$, we write $A(1)=2\pi i A$ on which the complex conjugation $c_\infty$ acts by $-1$. 
On the other hand, we have the complex conjugation $F_\infty$ (infinite Frobenius) acting on $X(\C)$. 
The script ``$+$" denotes the part fixed by $F_\infty \ot c_\infty$. 
It is endowed with the canonical $\Q$-structure 
$$H^1(X(\C),\Q(1))^+ =2 \pi i \cdot H^1(X(\C),\Q)^{F_\infty=-1}.$$ 
Under these identifications, the regulator map $r_\sD$ sends $\sum_n \{f_n,g_n\} \in \Ker(T)$ to 
$$i \cdot \sum_n (\log |f_n| d \arg g_n-\log |g_n| d \arg f_n). $$

As explained in \S1, Beilinson's conjecture states firstly that, $r_\sD \ot_\Q \R$ is an isomorphism, and hence
$$\dim_\Q H_\sM^2(X,\Q(2))_\Z=g.$$ 
Assuming the first, the determinant $R$ of a matrix expressing $\Im(r_\sD)$ with respect to the $\Q$-structure of the Deligne cohomology is well-defined in $\R^\times/\Q^\times$, 
and the conjecture states secondly that 
$$L^*(h^1(X),0) \equiv R \pmod{\Q^\times},$$  
where the analytic continuation of $L(h^1(X),s)$ is presumed. 

The conjecture extends naturally to motives associated with curves by taking projections. 
When $g>0$, nothing is known about the finite generation of the motivic cohomology 
and the injectivity of $r_\sD$.  
As in other related studies, we only consider a weaker version of the conjecture that there exists a $g$-dimensional subspace of the motivic cohomology having the desired properties. 
In other words, we consider the conjecture admitting the injectivity of $r_\sD$. 
A version with $\Z$-structures instead of $\Q$-structures will be discussed in \S5. 

\subsection{Regulators}

Let $X_N$ be the Fermat curve over $\Q$ as before. 
In this case,  $H^2_\sM(X_N,\Q(2))_\Z=H^2_\sM(X_N,\Q(2))$. 
Since
the tame symbols of $\{1-x,1-y\} \in K_2(\Q(X_N))$ are $2N$-torsion (\cite{ross}, Theorem 1), 
it defines an element 
$$e_N := 2N\{1-x,1-y\} \in H^2_\sM(X_N,\Q(2))_\Z.$$
For $(a,b) \in I_N$, we put
$$e_N^{[a,b]}:=p_N^{[a,b]*}(e_N) \in H_\sM^2(X_N^{[a,b]},\Q(2))_\Z.$$ 

\begin{rmk}
We changed the notation from \cite{otsubo-1}; $e_N$ and $e_N^{[a,b]}$ here are $2 N$ times those of loc. cit., so that they belong to the $\Z$-structure in the sense of \S5. 
\end{rmk}

We describe the image of $e_N^{[a,b]}$ under the regulator map
\begin{equation*}
r_\sD \colon H_\sM^2(X_N^{[a,b]},\Q(2))_\Z \ra H_\sD^2(X_{N,\R}^{[a,b]},\R(2)). 
\end{equation*}
For $(a,b)\in I_N$, let $\o_N^{a,b} \in H^1(X_N(\C),\C)$ be as in \S2.1 and 
we normalize it as
$$\wt\o_N^{a,b} := \left(\frac{1}{N} B\left(\frac{\angle a}{N}, \frac{\angle b}{N}\right)\right)^{-1} 
\o_N^{a,b}.$$
Then, since we have
$$\int_\g F_\infty\wt\o_N^{a,b}=  \int_\g \wt\o_N^{-a,-b} = \ol{\int_\g \wt\o_N^{a,b}} \ \in K_N$$
for all $\g \in H_1(X_N(\C),\Z)$,  
$$\wt\o_N^{a,b} \in H^1(X_N(\C),K_N), \quad F_\infty \wt\o_N^{a,b} =\wt\o_N^{-a,-b}= c_\infty \wt\o_N^{a,b}.$$
Therefore, if $(a,b) \in I_N^\prim$, the set 
$$\left\{\wt\o_N^{ha,hb}-\wt\o_N^{-ha,-hb} \bigm| h \in H_N^{a,b}\right\}$$
gives a basis of the $K_N$-vector space $H^1(X^{[a,b]}_N(\C),K_N)^+$.  

Now, the results of \cite{otsubo-1} (Theorem 4.14, Proposition 4.25) can be read as follows. 
For $(a,b) \in I_N$, let $F_N^{a,b} \in \R$ be as defined in \S1. Note that 
$$F_N^{-a,-b}=-F_N^{a,b}.$$ 
By \eqref{X3}, we are reduced to consider the primitive cases. 

\begin{thm}\label{regulator} 
For $(a,b) \in I_N^\prim$,  
we have 
\begin{align*}
r_\sD(e_N^{[a,b]})
= - \frac{1}{N} \sum_{h \in (\Z/N)^\times} F_N^{ha,hb} \wt\o_N^{ha,hb}
= - \frac{1}{N} \sum_{h \in H_N^{a,b}} F_N^{ha,hb} (\wt\o_N^{ha,hb}-\wt\o_N^{-ha,-hb}). 
\end{align*}
Moreover, $F_N^{ha,hb} \neq 0$ for all $h \in (\Z/N)^\times$ and $F_N^{ha,hb}>0$ if and only if $h \in H_N^{a,b}$. 
\end{thm}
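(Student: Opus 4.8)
The statement splits into two parts: the explicit formula for $r_\sD(e_N^{[a,b]})$, and the sign analysis of the coefficients $F_N^{ha,hb}$. For the first part, the plan is to reduce everything to a single-eigencharacter computation. By definition $e_N^{[a,b]}=p_N^{[a,b]*}e_N=\sum_{(a',b')\in[a,b]}p_N^{a',b'*}e_N$, and since $r_\sD$ is compatible with the action of algebraic correspondences, $r_\sD(e_N^{[a,b]})$ is the sum of the projections $p_N^{a',b'*}r_\sD(e_N)$ onto the individual eigenspaces. Extracting the $\theta_N^{a,b}$-component of the current $r_\sD(e_N)=2Ni(\log|1-x|\,d\arg(1-y)-\log|1-y|\,d\arg(1-x))$ is exactly the averaging $\frac{1}{N^2}\sum_{g\in G_N}\theta_N^{a,b}(g)^{-1}g^*$ defining $p_N^{a,b*}$ (using $\G_g^*=g^*$); it lands in the one-dimensional eigenspace $\C\,\wt\o_N^{a,b}$, and its coefficient is the hypergeometric value computed in \cite{otsubo-1}, Theorem 4.14, namely $-\tfrac1N F_N^{ha,hb}$ in the normalization $\wt\o_N^{ha,hb}$. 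Summing over the orbit $\{(ha,hb):h\in(\Z/N)^\times\}$ yields the first equality.

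For the second equality I would pass from a sum over $(\Z/N)^\times$ to a sum over $H_N^{a,b}$ by pairing $h$ with $-h$. Since $(\Z/N)^\times=H_N^{a,b}\sqcup(-H_N^{a,b})$ and $F_N^{-ha,-hb}=-F_N^{ha,hb}$ (immediate from the definition of $F_N^{a,b}$ and $\angle{-a}=N-\angle a$, which swaps the two $\wt F$-terms), the $h$- and $(-h)$-contributions combine into $F_N^{ha,hb}(\wt\o_N^{ha,hb}-\wt\o_N^{-ha,-hb})$, which is the asserted form.

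The heart of the theorem is the sign statement. Writing $\a=\angle{ha}/N$ and $\b=\angle{hb}/N$, we have $\angle{-ha}/N=1-\a$ and $\angle{-hb}/N=1-\b$, so $F_N^{ha,hb}=\wt F(\a,\b)-\wt F(1-\a,1-\b)$, and $h\in H_N^{a,b}\iff\a+\b<1$. Because $h\in H_N^{a,b}\iff -h\notin H_N^{a,b}$, the antisymmetry already reduces the whole claim to the single inequality $\wt F(\a,\b)>\wt F(1-\a,1-\b)$ for $0<\a,\b<1$ with $\a+\b<1$; non-vanishing then follows automatically, since $a+b\neq0$ forces $\a+\b\neq1$, so the inequality is strict.

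To prove this inequality I would first record the simplification $\frac{(\a+\b-1,n)}{(\a+\b,n)}=\frac{\a+\b-1}{\a+\b+n-1}$, which collapses the ${}_3F_2$ and, combined with $\frac{1}{\a+\b+n-1}=\int_0^1 t^{\a+\b+n-2}\,dt$ and the Euler integral for ${}_2F_1(\a,\b;\a+\b;t)$, gives for $\a+\b>1$ the manifestly positive representation
$$\wt F(\a,\b)=B(\a,\b)(\a+\b-1)\int_0^1\!\!\int_0^1 t^{\a+\b-2}s^{\b-1}(1-s)^{\a-1}(1-ts)^{-\a}\,ds\,dt.$$
Applied to $\wt F(1-\a,1-\b)$, whose arguments sum to $2-\a-\b>1$, this shows $\wt F(1-\a,1-\b)>0$; the same device, after subtracting the divergent $n=0$ term, produces a regularized integral for $\wt F(\a,\b)$ in the range $\a+\b<1$. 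The main obstacle is exactly this last regime: since $\a+\b-1<0$, termwise positivity of the series fails and the sign cannot be read off directly. I expect the decisive step to be a comparison of the two integral representations — either a monotonicity argument bounding $\wt F(1-\a,1-\b)$ below $\wt F(\a,\b)$ (the latter being inflated by the large factor $B(\a,\b)^2$), or, more cleanly, exhibiting $F_N^{ha,hb}$ itself as a single integral with positive kernel arising from the real locus of $X_N$. Producing that signed integral for the difference is where the genuine work lies.
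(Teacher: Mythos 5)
The first half of your argument is sound and is essentially what the paper does: Theorem \ref{regulator} is presented there as a direct reading of \cite{otsubo-1}, Theorem 4.14 (which supplies the coefficient $-\frac{1}{N}F_N^{ha,hb}$ of each $\wt\o_N^{ha,hb}$ after projecting by $p_N^{[a,b]*}$), followed by the pairing of $h$ with $-h$ using $F_N^{-a,-b}=-F_N^{a,b}$ and the partition $(\Z/N)^\times=H_N^{a,b}\sqcup(-H_N^{a,b})$. Your derivation of the two displayed equalities is correct and matches this.

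The second half --- $F_N^{ha,hb}\neq 0$ for all $h$, and $F_N^{ha,hb}>0$ iff $h\in H_N^{a,b}$ --- is where your proposal has a genuine gap. Your reduction of the whole claim to the single inequality $\wt F(\a,\b)>\wt F(1-\a,1-\b)$ for $0<\a,\b$ with $\a+\b<1$ is correct (the antisymmetry handles the complementary case, and $a+b\neq 0$ rules out $\a+\b=1$), and the double-integral representation you derive is valid in the regime where the arguments sum to more than $1$, so it does show $\wt F(1-\a,1-\b)>0$. But that is the wrong term to control: positivity of the subtrahend says nothing about the sign of the difference. In the regime actually needed, $\a+\b<1$, the prefactor $\a+\b-1$ is negative, the $n=0$ term of your $t$-integral diverges, and no termwise or integrand-positivity argument applies --- as you yourself observe. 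You then explicitly defer ``producing that signed integral for the difference'' as ``where the genuine work lies,'' but that step is precisely the content of the second assertion of the theorem; without it the non-vanishing of $r_\sD(e_N^{[a,b]})$ and the sign pattern used throughout \S 4 are not established. The paper does not reprove this inequality either: it invokes \cite{otsubo-1}, Proposition 4.25, where it is proved. As written, your proposal establishes the regulator formula but leaves the sign statement unproven, so it is incomplete; either supply the comparison argument or cite the result of loc.\ cit.
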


In particular, $e_N^{[a,b]}$ is non-trivial and if $g=1$, then $r_\sD\ot_\Q\R$ is surjective. 
We can transform the theorem into an expression of the regulator with respect to the $\Q$-structure 
using the following: 

\begin{ppn}\label{Q-structure}
For any $(a,b) \in I_N^\prim$, there exists a basis $\{\l_n \mid 1 \leq n\leq g\}$ of the $\Q$-vector space 
$H^1(X_N^{[a,b]}(\C),\Q(1))^+$ such that 
$$\wt\o_N^{ha,hb}-\wt\o_N^{-ha,-hb} = \frac{1}{2\pi i} \sum_{n=1}^g  (\z_N^{hn}-\z_N^{-hn}) \l_n$$
for all $h \in (\Z/N)^\times$. 
\end{ppn}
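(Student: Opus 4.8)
The plan is to produce the basis by Galois descent from $K_N$ to $\Q$; the one substantial input is the invertibility of a ``sine matrix'' built from the $\z_N^{hn}-\z_N^{-hn}$. First I would record the Galois equivariance of the normalized forms. The normalization by $\tfrac1N B$ is designed precisely so that $\s_h\wt\o_N^{a,b}=\wt\o_N^{ha,hb}$ in $H^1(X_N(\C),K_N)$ for every $h\in(\Z/N)^\times$, where $\s_h$ acts through the coefficients $K_N$; this follows from the explicit period formula behind Theorem~\ref{regulator}, the case $h=-1$ being the relation $F_\infty\wt\o_N^{a,b}=\wt\o_N^{-a,-b}$ recorded above. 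Thus $G(K_N/\Q)$ permutes the $K_N$-basis $\{\wt\o_N^{ha,hb}\}_{h\in(\Z/N)^\times}$ regularly, so a $\Q$-rational class is exactly a trace $\xi_k:=\sum_{h\in(\Z/N)^\times}\s_h(k)\,\wt\o_N^{ha,hb}$ with $k\in K_N$. Since $F_\infty\xi_k=\xi_{\ol k}$ (reindex $h\mapsto-h$ and use $F_\infty\wt\o_N^{ha,hb}=\wt\o_N^{-ha,-hb}$), the $(-1)$-eigenspace of $F_\infty$ is $V:=\{\xi_k\mid k\in K_N^-\}$, where $K_N^-=\{k\mid\ol k=-k\}$, and $k\mapsto\xi_k$ is a $\Q$-isomorphism $K_N^-\xrightarrow{\sim}V$ of $\Q$-vector spaces of dimension $g$. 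As $H^1(X_N^{[a,b]}(\C),\Q(1))^+=2\pi i\,V$, it suffices to find a $\Q$-basis $\{\mu_n=\xi_{k_n}\}$ of $V$ with $\wt\o_N^{ha,hb}-\wt\o_N^{-ha,-hb}=\sum_n(\z_N^{hn}-\z_N^{-hn})\mu_n$ and then set $\lambda_n=2\pi i\,\mu_n$.

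Next I would reduce this whole family of identities to a single linear system. Writing $\mu_n=\xi_{k_n}$ and comparing coefficients on the $K_N$-basis $\{\wt\o_N^{h'a,h'b}\}$, the identity indexed by $h$ reads $\sum_n(\z_N^{hn}-\z_N^{-hn})\s_{h'}(k_n)=\delta_{h',h}-\delta_{h',-h}$ for all $h'$; applying $\s_{h'}$ carries the equation for $(h,1)$ to the one for $(h'h,h')$, so the family is equivalent to the single system, for $h\in(\Z/N)^\times$,
\[
\sum_{n=1}^{g}(\z_N^{hn}-\z_N^{-hn})\,k_n=\delta_{h,1}-\delta_{h,-1}.
\]
Both sides are odd under $h\mapsto-h$, so this is equivalent to its restriction to a set $H_0$ of representatives of $(\Z/N)^\times/\{\pm1\}$ with $1\in H_0$, where it becomes a square system with matrix $M=(\z_N^{hn}-\z_N^{-hn})_{h\in H_0,\,1\le n\le g}$ and right-hand side the first standard basis vector.

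The heart of the argument, and the step I expect to be the main obstacle, is the invertibility of $M$. Suppose $\sum_nc_n(\z_N^{hn}-\z_N^{-hn})=0$ for all $h\in(\Z/N)^\times$, and set
\[
R(z):=\sum_{n=1}^{g}c_n\,(z^{g+n}-z^{g-n}),
\]
so that $R(\z_N^h)=\z_N^{hg}\sum_nc_n(\z_N^{hn}-\z_N^{-hn})=0$; as $h$ runs over $(\Z/N)^\times$ this says $R$ vanishes at all $\phi(N)=2g$ primitive $N$th roots of unity. Now $\deg R\le 2g$ and $R$ is anti-palindromic, $z^{2g}R(1/z)=-R(z)$. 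If $R\not\equiv0$ it must have degree exactly $2g$, since a nonzero polynomial cannot have more roots than its degree; then $R=c_g\Phi_N$ with $\Phi_N$ the $N$th cyclotomic polynomial, which is palindromic ($z^{2g}\Phi_N(1/z)=\Phi_N(z)$ for $N\ge3$), contradicting the anti-palindromy of $R$. Hence $R\equiv0$ and all $c_n=0$, so $M$ is invertible and the system above has a unique solution $(k_n)\in K_N^{g}$.

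Finally I would assemble the conclusion. Applying complex conjugation to the system and using uniqueness gives $\ol{k_n}=-k_n$, so $k_n\in K_N^-$ and $\mu_n=\xi_{k_n}\in V$, while the required identities hold for all $h$ by the reduction. Since $M$ is invertible and $\{\wt\o_N^{ha,hb}-\wt\o_N^{-ha,-hb}\}_{h\in H_0}$ is a $K_N$-basis of $H^1(X_N^{[a,b]}(\C),K_N)^+=V\ot_\Q K_N$, the $\mu_n$ form a $K_N$-basis of this space; being $g$ elements of its $\Q$-form $V$, they are a $\Q$-basis of $V$, and $\lambda_n=2\pi i\,\mu_n$ is the desired $\Q$-basis of $H^1(X_N^{[a,b]}(\C),\Q(1))^+$. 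Beyond routine linear algebra the only inputs are the Galois equivariance of the $\wt\o_N^{a,b}$ and the cyclotomic-polynomial computation giving $\det M\ne0$, where the real work sits.
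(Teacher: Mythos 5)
Your proof is correct, and it takes a genuinely different route from the paper's. The paper (deferring to \cite{otsubo-1}, Corollary 4.21, and Remark \ref{kappa gamma}) works on the homology side: since $H_1(X_N(\C),\Z)$ is a cyclic $\Z[G_N]$-module with a generator whose periods against the $\wt\o_N^{a,b}$ are explicitly $(1-\z_N^a)(1-\z_N^b)$, one produces rational cycles $\g_n$ with $\int_{\g_n}\wt\o_N^{ha,hb}=\z_N^{hn}$ and $F_\infty\g_n=\g_{-n}$, and simply sets $\l_n=2\pi i(\g_n^\vee-F_\infty\g_n^\vee)$; the displayed identity is then the dual-basis expansion, and no matrix ever needs to be inverted. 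You instead stay entirely on the cohomology side: Galois descent identifies the $\Q$-rational classes with the traces $\xi_k$, and the $\l_n$ are obtained by solving a linear system whose matrix is the sine matrix $(\z_N^{hn}-\z_N^{-hn})$. Your anti-palindromic-versus-palindromic argument with $\Phi_N$ is a clean, general proof that this matrix is invertible — equivalently that $D_N\neq 0$ in \eqref{D}, which the paper only records numerically case by case — and the conjugation-plus-uniqueness step correctly forces $k_n\in K_N^-$, i.e.\ the rationality of the solution. Both arguments ultimately rest on the same geometric input, namely the rationality and Galois equivariance of the periods of the normalized forms (formula \eqref{Okappa}); yours trades the explicit cycle construction for the nonvanishing of a cyclotomic determinant, while the paper's construction has the side benefit of meshing directly with the $\Z$-structure computations of \S5.
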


\begin{proof}
The argument is similar to \cite{otsubo-1}, Corollary 4.21. See also Remark \ref{kappa gamma}. 
\end{proof}

Define for the later use
\begin{equation}\label{D}
D_N:=\left|\det (\z_N^{hn}-\z_N^{-hn})_{h, n}\right|
=\left|\det\left(2 \sin \frac{2\pi hn}{N}\right)_{h, n}\right| 
\end{equation}
where $h$ runs through the $g$ elements of $(\Z/N)^\times$ with $\angle h <N/2$ and 
$n=1,2,\dots, g$. 
One calculates for example 
\begin{equation*}
\begin{split}
& D_3=D_6= \sqrt{3}, \quad  D_4=2,  \\
& D_5=D_{10}=5, \quad D_8=4\sqrt{2}, \quad D_{12}=2 \sqrt{3},\\
& D_7=D_{14}=7\sqrt{7}, \quad  D_9=D_{18}=9\sqrt{3}. 
\end{split}\end{equation*}
Note that $D_N$ is independent of $(a,b)$. 

\subsection{Symmetry for odd $N$}

Let $N$ be odd and define involutions $\a$, $\b$ on $X_N$ by 
$$\a(x,y)=\left(\frac{1}{x},-\frac{y}{x}\right), \quad \b(x,y)=\left(-\frac{x}{y},\frac{1}{y}\right). $$ 
In the projective equation
$$x_0^N+y_0^N=z_0^N,$$
$\a$ (resp. $\b$) exchanges $x_0$ and $-z_0$ (resp. $y_0$ and $-z_0$). 
We remark that exchanging $x$ and $y$ is useless, since it acts on $e_N$ by $-1$ because the 
Milnor symbol is skew-symmetric. 
The actions of $\a$, $\b$ on the Betti cohomology are given as follows (see \cite{otsubo-1}, Lemma 4.30). 

\begin{lem}\label{odd N}
For $(a,b) \in I_N$ with $\angle a + \angle b + \angle c=N$, we have
\begin{alignat*}{2}
& \a^* \wt\o_N^{a,b} =(-1)^{\angle b} \frac{\sin \frac{\angle{a}\pi}{N}}{\sin \frac{\angle{c}\pi}{N}}\wt\o_N^{c,b}, \quad 
&&  \a^* \wt\o_N^{-a,-b} =(-1)^{\angle b} \frac{\sin \frac{\angle{a}\pi}{N}}{\sin \frac{\angle{c}\pi}{N}}\wt\o_N^{-c,-b},\\
& \b^* \wt\o_N^{a,b} =(-1)^{\angle a} \frac{\sin \frac{\angle{b}\pi}{N}}{\sin \frac{\angle{c}\pi}{N}}\wt\o_N^{a,c}, 
&& \b^* \wt\o_N^{-a,-b} =(-1)^{\angle a} \frac{\sin \frac{\angle{b}\pi}{N}}{\sin \frac{\angle{c}\pi}{N}}\wt\o_N^{-a,-c}.
\end{alignat*}
\end{lem}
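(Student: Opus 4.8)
The plan is to compute the pull-backs $\a^*$ and $\b^*$ first at the level of the differential forms $\o_N^{a,b}$, then pass to the normalized forms $\wt\o_N^{a,b}$, and finally deduce the two formulas for $\wt\o_N^{-a,-b}$ by a conjugation argument. The hypothesis that $N$ is odd is exactly what makes $\a$ and $\b$ automorphisms of $X_N$: indeed $(1/x)^N+(-y/x)^N=x^{-N}(1+(-1)^Ny^N)$ equals $x^{-N}(1-y^N)=1$ only when $(-1)^N=-1$. I will also use that $\a$ and $\b$ are defined over $\Q$.

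First I would treat $\a$. Writing $\o_N^{a,b}=x^{\angle{a}-1}y^{\angle{b}-N}\,dx$ and substituting $x\mapsto 1/x$, $y\mapsto -y/x$ together with $d(1/x)=-x^{-2}\,dx$, one collects the powers of $x$ and $y$ and the accumulated sign. The power of $x$ becomes $x^{N-1-\angle{a}-\angle{b}}$, which by the hypothesis $\angle{a}+\angle{b}+\angle{c}=N$ is exactly $x^{\angle{c}-1}$; the power of $y$ is unchanged; and the sign $(-1)^{\angle{b}-N+1}$ equals $(-1)^{\angle{b}}$ because $N$ is odd. This yields the clean identity of forms $\a^*\o_N^{a,b}=(-1)^{\angle{b}}\o_N^{c,b}$, with no cohomological correction needed. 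For $\b$ the computation is similar but requires $d(-x/y)$; here I would differentiate $x^N+y^N=1$ to get $x^{N-1}\,dx+y^{N-1}\,dy=0$, substitute for $dy$, and use $x^N+y^N=1$ again to obtain the tidy relation $d(-x/y)=-y^{-(N+1)}\,dx$. Substituting then gives $\b^*\o_N^{a,b}=(-1)^{\angle{a}}\o_N^{a,c}$.

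Next I would pass to normalized forms via $\o_N^{a,b}=\frac1N B(\tfrac{\angle{a}}N,\tfrac{\angle{b}}N)\wt\o_N^{a,b}$, so that the scalar picks up a ratio of beta values. For $\a$ this ratio is $B(\tfrac{\angle{c}}N,\tfrac{\angle{b}}N)/B(\tfrac{\angle{a}}N,\tfrac{\angle{b}}N)$; after cancelling $\G(\tfrac{\angle{b}}N)$ and using $\tfrac{\angle{a}+\angle{b}}N=1-\tfrac{\angle{c}}N$ and $\tfrac{\angle{c}+\angle{b}}N=1-\tfrac{\angle{a}}N$, the reflection formula $\G(z)\G(1-z)=\pi/\sin(\pi z)$ converts it into $\sin(\tfrac{\angle{a}\pi}N)/\sin(\tfrac{\angle{c}\pi}N)$. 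The $\b$ case is symmetric and produces $\sin(\tfrac{\angle{b}\pi}N)/\sin(\tfrac{\angle{c}\pi}N)$. This establishes the first and third displayed formulas.

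Finally, for the $(-a,-b)$ formulas I would invoke the complex conjugation $F_\infty$. Since $\a$ and $\b$ are defined over $\Q$, the maps $\a^*,\b^*$ commute with $F_\infty$, and by the relation $F_\infty\wt\o_N^{a,b}=\wt\o_N^{-a,-b}$ recalled in \S3.2 one obtains, e.g., $\a^*\wt\o_N^{-a,-b}=F_\infty\a^*\wt\o_N^{a,b}$; as $F_\infty$ is $\C$-linear on Betti cohomology and the scalar factor is real, this equals precisely $(-1)^{\angle{b}}\tfrac{\sin(\angle{a}\pi/N)}{\sin(\angle{c}\pi/N)}\wt\o_N^{-c,-b}$, and likewise for $\b$. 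The main obstacle is the bookkeeping in the form-level computations — correctly tracking the integer exponents via the Fermat relation and the parity signs, which is where $N$ odd enters decisively, and in the $\b$ case reducing $d(-x/y)$ cleanly; once these are in hand, the normalization step and the conjugation argument are routine. (One could also verify the negative cases directly, at the cost of a cohomological reduction using $x^N+y^N=1$, which serves as a useful consistency check.)
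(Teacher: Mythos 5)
Your proof is correct: the form-level substitutions (including the reduction $d(-x/y)=-y^{-(N+1)}dx$ via the Fermat relation), the conversion of the beta-ratio to a sine-ratio by the reflection formula, and the deduction of the $(-a,-b)$ cases from $F_\infty$-equivariance all check out, and the last step rightly avoids the direct computation for $(-a,-b)$, where $\angle{-a}+\angle{-b}+\angle{-c}=2N$ would force a cohomological correction. The paper itself gives no proof here but defers to \cite{otsubo-1}, Lemma 4.30, and your argument is exactly the expected one.
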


We define elements of $H^2_\sM(X_N^{[a,b]},\Q(2))_\Z$ by
\begin{equation*}
e_{N,\a}^{a,b} = p_N^{[a,b] *}\circ \a^* (e_N), \quad e_{N,\b}^{a,b} = p_N^{[a,b]*}\circ \b^* (e_N).
\end{equation*}
Since
$$p_N^{[a,b] *}\circ \a^* =\a^*\circ p_N^{[c,b] *}, \quad p_N^{[a,b] *} \circ \b^* =\b^*\circ p_N^{[a,c] *}, $$
and the regulator map is compatible with the actions of $p_N^{[a,b]}$, $\a$, $\b$, we obtain the following proposition 
from Theorem \ref{regulator}. Note that $H_N^{a,b}=H_N^{c,b}=H_N^{a,c}$. 

\begin{ppn}
Let $N$ be odd. For any $(a,b) \in I_N^\prim$, we have
\begin{align*}
& r_\sD(e_{N,\a}^{[a,b]}) = -\frac{1}{N} \sum_{h\in H_N^{a,b}} (-1)^{\angle{hb}} \frac{\sin\frac{\angle{hc}\pi}{N}}{\sin\frac{\angle{ha}\pi}{N}}F_N^{hc,hb} (\wt\o_N^{ha,hb}-\wt\o_N^{-ha,-hb}),\\
& r_\sD(e_{N,\b}^{[a,b]}) = -\frac{1}{N} \sum_{h\in H_N^{a,b}} (-1)^{\angle{ha}} \frac{\sin\frac{\angle{hc}\pi}{N}}{\sin\frac{\angle{hb}\pi}{N}}F_N^{ha,hc} (\wt\o_N^{ha,hb}-\wt\o_N^{-ha,-hb}). 
\end{align*}
\end{ppn}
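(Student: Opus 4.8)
The plan is to derive the proposition directly from Theorem~\ref{regulator} by transporting it through the involutions $\a$ and $\b$. The key observation is the stated intertwining identity $p_N^{[a,b]*}\circ\a^*=\a^*\circ p_N^{[c,b]*}$ (and the analogous one for $\b$), together with the fact that $\a^*$ and $\b^*$ are induced by algebraic morphisms and hence commute with the regulator map $r_\sD$. Applying $r_\sD$ to the definition $e_{N,\a}^{[a,b]}=p_N^{[a,b]*}\circ\a^*(e_N)$ and using compatibility, I would rewrite this as $\a^*\bigl(r_\sD(e_N^{[c,b]})\bigr)$. Since $H_N^{a,b}=H_N^{c,b}=H_N^{a,c}$, the index set over which the sum runs is unchanged under these substitutions, which is what makes the bookkeeping clean.

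First I would substitute the formula from Theorem~\ref{regulator} for $r_\sD(e_N^{[c,b]})$, written in its first form as $-\frac{1}{N}\sum_{h\in(\Z/N)^\times}F_N^{hc,hb}\,\wt\o_N^{hc,hb}$. Next I would apply $\a^*$ term by term, invoking Lemma~\ref{odd N}, which tells me precisely how $\a^*$ sends $\wt\o_N^{hc,hb}$ back to a multiple of $\wt\o_N^{ha,hb}$: the relevant instance of the lemma, with the roles of $a$ and $c$ exchanged, produces the factor $(-1)^{\angle{hb}}\frac{\sin(\angle{hc}\pi/N)}{\sin(\angle{ha}\pi/N)}$. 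One must check that applying the lemma to the triple $(c,b)$ (whose complement is $a$) indeed yields this ratio and sign; this is the point where I expect to verify the orientation of the sine quotient, since the lemma as stated has $\angle a$ in the numerator and $\angle c$ in the denominator, and the substitution $a\leftrightarrow c$ inverts the ratio. After this substitution, re-expressing the result in the symmetrized basis $\wt\o_N^{ha,hb}-\wt\o_N^{-ha,-hb}$ indexed by $h\in H_N^{a,b}$ gives the first displayed formula; the second follows identically with $\a$ replaced by $\b$, using the second row of Lemma~\ref{odd N}.

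The main obstacle is purely the careful matching of indices and signs, not any conceptual difficulty. Specifically I must make sure that the $K_N$-linearity of $\a^*$ and $F_\infty$ is respected so that the pairing of $\wt\o_N^{hc,hb}$ with $\wt\o_N^{-hc,-hb}$ is carried correctly onto the symmetrized basis, and that the factor $(-1)^{\angle{hb}}$, which depends on $h$, is attached to the correct summand. Because Lemma~\ref{odd N} already records both $\a^*\wt\o_N^{a,b}$ and $\a^*\wt\o_N^{-a,-b}$ with the \emph{same} scalar, the two halves of each symmetrized generator transform consistently, so the combination $\wt\o_N^{ha,hb}-\wt\o_N^{-ha,-hb}$ on the left corresponds to $\wt\o_N^{hc,hb}-\wt\o_N^{-hc,-hb}$ on the right up to that common factor. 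Once this is confirmed, the proposition is immediate; I would therefore present the argument as a short computation reading off Lemma~\ref{odd N} and Theorem~\ref{regulator}, with the sign verification being the only step warranting an explicit line.
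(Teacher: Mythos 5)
Your proposal is correct and follows the paper's own route exactly: the paper derives the proposition from Theorem \ref{regulator} via the intertwining relation $p_N^{[a,b]*}\circ\a^*=\a^*\circ p_N^{[c,b]*}$, the compatibility of $r_\sD$ with $\a^*$, $\b^*$ and the projectors, and Lemma \ref{odd N} applied to the triple $(c,b)$ (which inverts the sine ratio, as you note), using $H_N^{a,b}=H_N^{c,b}=H_N^{a,c}$ to keep the index set fixed. The sign and symmetrization bookkeeping you flag is precisely the only content of the verification, and you handle it correctly.
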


In \cite{otsubo-1}, it is shown that the three elements $e_N^{[a,b]}$, $e_{N,\a}^{[a,b]}$, $e_{N,\b}^{[a,b]}$ are sufficient for the surjectivity of $r_\sD \ot_\Q \R$ for $X_5^{[1,1]}$ (hence for the whole $X_5$) 
and $X_7^{[1,2]}$, but not for $X_7^{[1,1]}$. 
In general, the possibility of liner independence is restricted by the following. 

\begin{cor}\label{restriction}
If $a=b$ (resp. $a=c$, $b=c$), then we have $r_\sD(e_{N,\a}^{[a,b]})=r_\sD(e_{N,\b}^{[a,b]})$ 
(resp. $r_\sD(e_{N,\a}^{[a,b]})=r_\sD(e_N^{[a,b]})$, $r_\sD(e_{N,\b}^{[a,b]})=r_\sD(e_N^{[a,b]})$). 
\end{cor}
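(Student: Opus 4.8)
The plan is to prove all three identities by a direct comparison of coefficients. Since $N$ is odd and, as noted just before the corollary, $H_N^{a,b}=H_N^{c,b}=H_N^{a,c}$, the three regulator images $r_\sD(e_N^{[a,b]})$, $r_\sD(e_{N,\a}^{[a,b]})$, $r_\sD(e_{N,\b}^{[a,b]})$ are all expressed, via Theorem \ref{regulator} and the preceding Proposition, in the form $-\frac1N\sum_{h\in H_N^{a,b}} c_h\,(\wt\o_N^{ha,hb}-\wt\o_N^{-ha,-hb})$ against the \emph{same} family of vectors, for suitable real coefficients $c_h$. Because these vectors form a basis of $H^1(X_N^{[a,b]}(\C),K_N)^+$, each asserted equality reduces to checking the equality of the coefficients $c_h$ for every $h\in H_N^{a,b}$. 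Thus the whole proof is bookkeeping with the three explicit coefficient formulas.

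Two inputs drive this bookkeeping. First, the symmetry $F_N^{a,b}=F_N^{b,a}$: it follows from the definition of $F_N^{a,b}$ in \S1 together with $\wt F(\a,\b)=\wt F(\b,\a)$, which holds because the factor $B(\a,\b)^2$ is symmetric and the series ${}_3F_2$ in $\wt F$ has upper parameters $\a,\b,\a+\b-1$ and lower parameters $\a+\b,\a+\b$, visibly invariant under $\a\leftrightarrow\b$. Second, a parity relation: for every $h\in H_N^{a,b}$ one has $\angle{ha}+\angle{hb}+\angle{hc}=N$, since the sum of the three representatives lies in $\{N,2N\}$ and the defining inequality $\angle{ha}+\angle{hb}<N$ forces the value $N$. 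As $N$ is odd, this pins down the parities of $\angle{ha}$, $\angle{hb}$, $\angle{hc}$ that enter the sign factors $(-1)^{\angle{ha}}$ and $(-1)^{\angle{hb}}$ appearing in the Proposition.

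For the case $a=b$ I would substitute $b=a$ into the two formulas of the Proposition. Both coefficients then carry the identical sign factor $(-1)^{\angle{ha}}$ and the identical ratio $\sin(\angle{hc}\pi/N)/\sin(\angle{ha}\pi/N)$, and they differ only in the hypergeometric factor, which is $F_N^{hc,ha}$ for $e_{N,\a}^{[a,b]}$ and $F_N^{ha,hc}$ for $e_{N,\b}^{[a,b]}$. These coincide by $F_N^{a,b}=F_N^{b,a}$, so the coefficients agree term by term and $r_\sD(e_{N,\a}^{[a,b]})=r_\sD(e_{N,\b}^{[a,b]})$.

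For the cases $a=c$ and $b=c$ I would compare the relevant $\a$- or $\b$-formula with the formula for $r_\sD(e_N^{[a,b]})$ in Theorem \ref{regulator}. When $a=c$ the $\sin$-ratio collapses to $1$ and $F_N^{hc,hb}=F_N^{ha,hb}$, so the coefficient of $e_{N,\a}^{[a,b]}$ reduces to $(-1)^{\angle{hb}}F_N^{ha,hb}$; the case $b=c$ is symmetric, reducing to $(-1)^{\angle{ha}}F_N^{ha,hb}$. The remaining task, and the only genuinely delicate point, is to evaluate these residual sign factors using the parity relation (for instance $a=c$ gives $\angle{hb}=N-2\angle{ha}$, whose parity is fixed by $N$ being odd) and thereby obtain the asserted identity with $r_\sD(e_N^{[a,b]})$ uniformly in $h$. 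I expect this parity bookkeeping — together with the preliminary verification that all three sums are indexed by the common set $H_N^{a,b}$, so that the basis vectors line up — to be the main obstacle to get exactly right; once the signs are settled, the conclusion is immediate from the two displayed formulas.
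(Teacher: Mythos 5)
Your overall strategy is the right one, and it is the only proof the paper itself has in mind: the corollary is stated with no separate argument because it is meant to follow by comparing, coefficient by coefficient, the formulas of the preceding Proposition with Theorem \ref{regulator}, all three sums being indexed by the common set $H_N^{a,b}=H_N^{c,b}=H_N^{a,c}$ and expanded against the same basis $\{\wt\o_N^{ha,hb}-\wt\o_N^{-ha,-hb}\}_{h\in H_N^{a,b}}$. Your treatment of the case $a=b$ is complete and correct: the sign factors and sine ratios coincide on the nose, and the remaining discrepancy $F_N^{hc,ha}$ versus $F_N^{ha,hc}$ disappears because $\wt F(\a,\b)=\wt F(\b,\a)$, hence $F_N^{a,b}=F_N^{b,a}$.

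The gap is in the cases $a=c$ and $b=c$, precisely at the step you defer as ``the main obstacle.'' You correctly reduce the coefficient of $e_{N,\a}^{[a,b]}$ to $(-1)^{\angle{hb}}F_N^{ha,hb}$ and correctly derive $\angle{hb}=N-2\angle{ha}$, but you stop before evaluating the parity. Doing so: since $N$ is odd and $2\angle{ha}$ is even, $\angle{hb}$ is \emph{odd} for every $h\in H_N^{a,b}$, so $(-1)^{\angle{hb}}=-1$ and your method yields $r_\sD(e_{N,\a}^{[a,b]})=-r_\sD(e_N^{[a,b]})$, not the stated $+r_\sD(e_N^{[a,b]})$ (and symmetrically $r_\sD(e_{N,\b}^{[a,b]})=-r_\sD(e_N^{[a,b]})$ when $b=c$). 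So the bookkeeping you postpone does not close as you expect: either a compensating sign must be located (none is visible in the Proposition as stated), or the corollary holds only up to sign --- which is what one should suspect, since the even-$N$ analogue (Corollary \ref{restriction even}) carries the negative factor $-2\phi(N)$ in exactly these cases, and since only the resulting linear dependence, not the sign, is ever used in \S4. A complete write-up must confront this head-on rather than assert that ``once the signs are settled, the conclusion is immediate.''
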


\subsection{Symmetry for even $N$}
Let  $N$ be even.  Then, we do not have $\a$, $\b$ as above defined over $\Q$. 
For simplicity, we write
$$L=K_{2N}, \quad K=K_N, \quad X_L = X_N \ot_\Q L, \quad X_K=X_N \ot_\Q K,$$ 
and let 
$$f_L \colon X_L \ra X_N, \quad f_K \colon X_K \ra X_N, \quad f_{L/K} \colon X_L \ra X_K$$
be the natural morphisms. 
Define involutions $\a_L$, $\b_L$ on $X_L$ by
$$\a_L(x,y) = \left(\frac{\z_N}{x}, \frac{\z_{2N}y}{x}\right), \quad 
\b_L(x,y) = \left(\frac{\z_{2N}x}{y}, \frac{\z_{N}}{y}\right),$$
and elements of $H_\sM^2(X_N^{[a,b]},\Q(2))_\Z$ by
\begin{equation*}
\begin{split}
e_{N,\a}^{[a,b]}= p_N^{[a,b] *} \circ {f_L}_* \circ \a_L^* \circ f_L^*(e_N), \\
e_{N,\b}^{[a,b]}= p_N^{[a,b] *} \circ {f_L}_* \circ \b_L^* \circ f_L^*(e_N),
\end{split}\end{equation*}
where $f_{*}$ denotes the push-forward map for a proper morphism $f$. 

Let us see the action of $\a_L$ and $\b_L$ on the Betti cohomology of $X_L$. 
Since $X_L(\C) = \Mor_\Q(\C,X_L)$ is the disjoint union of $X_{L,\tau}(\C) = \Mor_{L,\tau}(\C,X_L)$ for all embeddings $\tau \colon L \hookrightarrow \C$, it suffices to describe the actions on each component. 
Let $\{\wt\o_\tau^{a,b} \mid (a,b) \in I_N\}$ be the basis of $H^1(X_{L,\tau}(\C),\C)$ defined as before. 

\begin{lem}
Let $\tau \colon L \hookrightarrow \C$ be an embedding and $\ol\t$ be its complex conjugate.
For $(a,b) \in I_N$ with $\angle a + \angle b + \angle c =N$, 
 we have
\begin{alignat*}{2}
& \a_L^*\wt\o_\tau^{a,b}= \t(\z_N^\angle{a} \z_{2N}^{\angle b}) \frac{\sin \frac{\angle{a}\pi}{N}}{\sin \frac{\angle{c}\pi}{N}} \wt\o_\t^{c,b}, \quad 
&& \a_L^*\wt\o_\tau^{-a,-b}= \ol\t(\z_N^\angle{a} \z_{2N}^{\angle b}) \frac{\sin \frac{\angle{a}\pi}{N}}{\sin \frac{\angle{c}\pi}{N}} \wt\o_\t^{-c,-b}, 
\\
& \b_L^*\wt\o_\tau^{a,b}= \t(\z_N^\angle{b} \z_{2N}^{\angle a}) \frac{\sin \frac{\angle{b}\pi}{N}}{\sin \frac{\angle{c}\pi}{N}} \wt\o_\t^{a,c}, 
&& \b_L^*\wt\o_\tau^{-a,-b}= \ol\t(\z_N^\angle{b} \z_{2N}^{\angle a}) \frac{\sin \frac{\angle{b}\pi}{N}}{\sin \frac{\angle{c}\pi}{N}} \wt\o_\t^{-a,-c}. 
\end{alignat*} 
\end{lem}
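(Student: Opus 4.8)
The plan is to compute on each connected component $X_{L,\t}(\C)$ by direct substitution, and to obtain the $-a,-b$ formulas for free from the $a,b$ formulas using the complex conjugation $F_\infty$ recorded in \S3.2. First I would note the elementary facts that make the substitution legitimate. Since $\z_N^N=1$ and $\z_{2N}^N=-1$, one has $(\z_N/x)^N+(\z_{2N}y/x)^N=(1-y^N)/x^N=1$ on $X_N$, so $\a_L$ (and likewise $\b_L$) is an $L$-automorphism of $X_L$, manifestly an involution; and because its defining constants lie in $L$, on the component attached to an embedding $\t\colon L\hookrightarrow\C$ it acts by $(x,y)\mapsto(\t(\z_N)/x,\t(\z_{2N})y/x)$.

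Next I would treat the forms indexed by $a,b$. Writing $\o_N^{a,b}=x^{\angle a-1}y^{\angle b-N}\,dx$ and substituting, the constants collect (using $\t(\z_{2N})^N=-1$) to $\t(\z_N^{\angle a}\z_{2N}^{\angle b})$, while the exponent of $x$ becomes $N-1-\angle a-\angle b=\angle c-1$, which is where the hypothesis $\angle a+\angle b+\angle c=N$ enters and keeps the monomial in range. This yields the exact identity of rational differentials $\a_L^*\o_N^{a,b}=\t(\z_N^{\angle a}\z_{2N}^{\angle b})\,\o_N^{c,b}$. Passing to the normalized forms multiplies the constant by $B(\angle c/N,\angle b/N)/B(\angle a/N,\angle b/N)$, and this beta-quotient collapses to $\sin(\angle a\pi/N)/\sin(\angle c\pi/N)$ by $B(\a,\b)=\G(\a)\G(\b)/\G(\a+\b)$ together with the reflection formula $\G(s)\G(1-s)=\pi/\sin\pi s$; this is the first displayed identity.

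For the forms indexed by $-a,-b$, the same substitution instead produces $x^{-\angle c-1}$, an out-of-range monomial, so I would bypass it using conjugation. Because $\a_L$ is defined over $L$, a direct check gives the compatibility $F_\infty\circ\a_{L,\t}=\a_{L,\ol\t}\circ F_\infty$ (conjugation turns $\t$ into $\ol\t$ on the constants), hence $\a_{L,\t}^*\circ F_\infty^*=F_\infty^*\circ\a_{L,\ol\t}^*$ on cohomology. Feeding in the relation $F_\infty^*\wt\o_{\ol\t}^{a,b}=\wt\o_\t^{-a,-b}$ from \S3.2 and applying the first identity to the embedding $\ol\t$ produces the second identity at once; the factor $\ol\t(\z_N^{\angle a}\z_{2N}^{\angle b})$ appears precisely because the first identity is invoked for $\ol\t$. (The out-of-range form still represents a class in the one-dimensional $\theta_N^{-c,-b}$-isotypic part of $H^1$, so this class identity is the correct interpretation.) The statements for $\b_L$ follow identically, or formally from those for $\a_L$ via the coordinate exchange $x\leftrightarrow y$, which swaps the roles of $a$ and $b$ and replaces $\z_N^{\angle a}\z_{2N}^{\angle b}$ by $\z_N^{\angle b}\z_{2N}^{\angle a}$.

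The hard part will be the bookkeeping of the two distinct conjugations rather than any algebra: the geometric $F_\infty$ on $X_L(\C)$, which permutes the components $\t\leftrightarrow\ol\t$, versus the arithmetic passage $\t\mapsto\ol\t$ on coefficients. One must read $F_\infty^*\wt\o_{\ol\t}^{a,b}=\wt\o_\t^{-a,-b}$ correctly as an identity of classes on $X_{L,\t}(\C)$—valid because the $\wt\o$'s were normalized in \S3.2 to have $K_N$-rational periods—and keep track that $\a_L$, being defined over $L$, preserves each component, so that the target forms carry the index $\t$ (and not $\ol\t$) exactly as in the statement. Once these compatibilities are fixed, both identities follow with no further period computation.
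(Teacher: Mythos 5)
Your proof is correct and follows essentially the same route as the paper: the left identities by direct substitution (with the same beta-to-sine reduction via the reflection formula), and the right ones by conjugation. The only cosmetic difference is that you deduce the $(-a,-b)$ cases via the geometric conjugation $F_\infty$, which swaps the components $\t\leftrightarrow\ol\t$, whereas the paper applies the coefficient conjugation $c_\infty$ componentwise using $c_\infty\wt\o_\t^{a,b}=\wt\o_\t^{-a,-b}$, which sidesteps the component bookkeeping you flag; both yield the same scalar $\ol\t(\z_N^{\angle a}\z_{2N}^{\angle b})$.
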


\begin{proof}
This is proven similarly as Lemma \ref{odd N}. 
The left equations are direct, from which the right ones are deduced using 
$c_\infty \wt\o_\t^{a,b}=\wt\o_\t^{-a,-b}$.
\end{proof}

Let $T_{K_N/\Q}\colon K_N \ra \Q$ denote the trace map. 
Since $N$ is even, we just say that $a \in \Z/N$ is odd or even depending on the parity of $\angle a$.  

\begin{ppn}\label{even N}
Let $N$ be even and $(a,b) \in I_N^\prim$. 
\begin{enumerate}
\item
If $b$ is odd, then $r_\sD(e_{N,\a}^{[a,b]}) =0$. If $b$ is even, we have
$$
r_\sD(e_{N,\a}^{[a,b]}) =- \frac{2}{N} \sum_{h\in H_N^{a,b}}  T_{K_N/\Q}(\z_N^{\angle{hc}+\angle{hb}/2}) \frac{\sin\frac{\angle{hc}\pi}{N}}{\sin\frac{\angle{ha}\pi}{N}}F_N^{hc,hb}(\wt\o_N^{ha,hb}-\wt\o_N^{-ha,-hb}).$$
\item 
If $a$ is odd, then $r_\sD(e_{N,\b}^{[a,b]}) =0$. If $a$ is even, we have 
$$
r_\sD(e_{N,\b}^{[a,b]}) =- \frac{2}{N} \sum_{h\in H_N^{a,b}}  T_{K_N/\Q}(\z_N^{\angle{hc}+\angle{ha}/2}) \frac{\sin\frac{\angle{hc}\pi}{N}}{\sin\frac{\angle{hb}\pi}{N}}F_N^{ha,hc}(\wt\o_N^{ha,hb}-\wt\o_N^{-ha,-hb}). 
$$
\end{enumerate}
\end{ppn}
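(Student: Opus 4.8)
\section*{Proof proposal}

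The plan is to exploit the functoriality of the Beilinson regulator. Since $e_{N,\a}^{[a,b]} = p_N^{[a,b] *}\circ {f_L}_*\circ\a_L^*\circ f_L^*(e_N)$ is obtained from $e_N$ by pullback, the involution $\a_L$, pushforward and projection, and since $r_\sD$ is compatible with each of these operations, I would compute
$$r_\sD(e_{N,\a}^{[a,b]}) = p_N^{[a,b] *}\circ {f_L}_*\circ\a_L^*\circ f_L^*\bigl(r_\sD(e_N)\bigr),$$
starting from the explicit expression $r_\sD(e_N) = -\frac1N\sum_{(a',b')\in I_N}F_N^{a',b'}\wt\o_N^{a',b'}$ furnished by Theorem \ref{regulator}. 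The three geometric maps act transparently on Betti classes: writing $X_L(\C)=\coprod_\tau X_{L,\tau}(\C)$ over the embeddings $\tau\colon L\hookrightarrow\C$, the pullback $f_L^*$ is the diagonal $\wt\o_N^{a',b'}\mapsto(\wt\o_\tau^{a',b'})_\tau$, the involution acts componentwise by the Lemma above, and the pushforward ${f_L}_*$ is the sum over the components. Only the primitive orbit $[a,b]$ survives the final projection $p_N^{[a,b] *}$, so it suffices to follow the relevant components.

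First I would feed $\wt\o_N^{a',b'}$ through this composite. By the Lemma, on the $\tau$-component $\a_L^*$ multiplies $\wt\o_\tau^{a',b'}$ by $\t(\z_N^{\angle{a'}}\z_{2N}^{\angle{b'}})$ together with the sine-ratio, and sends it to $\wt\o_\tau^{c',b'}$ with $c'=-a'-b'$. Summing over $\tau$ collapses the coefficient into a trace, $\sum_\tau\t(\z_N^{\angle{a'}}\z_{2N}^{\angle{b'}}) = T_{K_{2N}/\Q}(\z_{2N}^{2\angle{a'}+\angle{b'}})$, so that
$$ {f_L}_*\a_L^* f_L^*(\wt\o_N^{a',b'}) = T_{K_{2N}/\Q}(\z_{2N}^{2\angle{a'}+\angle{b'}})\,\frac{\sin\frac{\angle{a'}\pi}N}{\sin\frac{\angle{c'}\pi}N}\,\wt\o_N^{c',b'}.$$
Working in the $+$-part basis $\{\wt\o_N^{ha,hb}-\wt\o_N^{-ha,-hb}\mid h\in H_N^{a,b}\}$, where the hypothesis $\angle{ha}+\angle{hb}<N$ of the Lemma is met, the class $\wt\o_N^{ha,hb}$ is produced from the starting index $(a',b')=(hc,hb)$, for which $c'=ha$. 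Substituting and using $H_N^{a,b}=H_N^{c,b}$ assembles the surviving terms into a sum over $h\in H_N^{a,b}$ carrying the factor $F_N^{hc,hb}$ and the sine-ratio $\sin\frac{\angle{hc}\pi}N/\sin\frac{\angle{ha}\pi}N$, exactly as in the statement.

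It then remains to evaluate $T_{K_{2N}/\Q}(\z_{2N}^{2\angle{hc}+\angle{hb}})$, and this is where the dichotomy and the main arithmetic input enter. Since $N$ is even, $4\mid 2N$, so for any odd exponent $m$ the element $\z_{2N}^m$ is a primitive $d$-th root of unity with $d=2N/\gcd(m,2N)$ divisible by $4$; as $d$ is not squarefree, $T_{K_{2N}/\Q}(\z_{2N}^m)=\frac{\phi(2N)}{\phi(d)}\mu(d)=0$. If $b$ is odd then $\angle{hb}$, and hence $2\angle{hc}+\angle{hb}$, is odd for every $h$, so every coefficient vanishes and $r_\sD(e_{N,\a}^{[a,b]})=0$. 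If $b$ is even then $2\angle{hc}+\angle{hb}=2(\angle{hc}+\angle{hb}/2)$, so $\z_{2N}^{2\angle{hc}+\angle{hb}}=\z_N^{\angle{hc}+\angle{hb}/2}\in K_N$, and because $[K_{2N}:K_N]=2$ the trace of a $K_N$-element drops as $T_{K_{2N}/\Q}=2\,T_{K_N/\Q}$; this yields precisely the factor $-\frac2N$ and the trace $T_{K_N/\Q}(\z_N^{\angle{hc}+\angle{hb}/2})$ of the statement. The assertions for $\b_L$ follow symmetrically, interchanging the roles of $a$ and $b$ and using the second line of the Lemma. I expect the genuine subtlety to lie in the middle step, namely identifying ${f_L}_*$ with the trace over embeddings and correctly tracking the index shift $(a',b')\mapsto(c',b')$ against the final projection $p_N^{[a,b] *}$, rather than in the two closing trace evaluations, which are routine once the exponent $2\angle{hc}+\angle{hb}$ has been pinned down.
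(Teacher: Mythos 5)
Your argument is correct and follows essentially the same route as the paper: functoriality of $r_\sD$, the explicit formula for $r_{\sD}(f_L^*(e_N))$ from Theorem \ref{regulator}, the Lemma on $\a_L^*$, and collapsing the pushforward into a trace over embeddings. The only (cosmetic) difference is that the paper factors the pushforward through $K_N$ and sees the parity dichotomy from $\t(\z_{2N}^{\angle b})+\t'(\z_{2N}^{\angle b})=\t(\z_{2N}^{\angle b})(1+(-1)^{\angle b})$, whereas you take the full trace $T_{K_{2N}/\Q}$ in one step and invoke the M\"obius formula for the vanishing; both evaluations are equivalent.
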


\begin{proof}
We only prove (i) and the proof of (ii) is parallel. 
By \cite{otsubo-1}, Theorem 4.14, the $\t$-component 
$r_{\sD,\t}$ of the regulator for $X_L$ is given by
$$r_{\sD,\t}(f_L^*(e_N)) = -\frac{1}{N} \sum_{(a,b) \in I_N} F_N^{ha,hb} \wt\o_{N,\t}^{ha,hb},$$
where $h \in (\Z/N)^\times$ is the element satisfying $\t(\z_N)^h=\z_N$. 
Let $\t'$ be the conjugate of $\t$ over $K$ and $\t_0$ be the restriction of $\t$ to $K$. 
Then we have 
$$f_{L/K *}(\wt\o_\t^{a,b})=f_{L/K *}(\wt\o_{\t'}^{a,b})=\wt\o_{\t_0}^{a,b},$$
where  
$\wt\o_{\t_0}^{a,b} \in H^1(X_{K,\t_0}(\C),\C)$ is similarly defined. 
By the lemma above  
and the compatibility of the regulator maps with pull-backs and push-forwards, the coefficient of 
$\wt\o_{\t_0}^{a,b}$ in the expression of $r_{\sD,\t_0}(f_{L/K *}\circ \a^* \circ f_L^*(e_N))$ is 
\begin{align*}
\bigl(\t(\z_N^\angle{c}\z_{2N}^{\angle b}) + \t'(\z_N^c\z_{2N}^{\angle b})\bigr) F_N^{c,b}
& =\t_0(\z_N^\angle{c})\t(\z_{2N}^{\angle{b}})\bigl(1+(-1)^{\angle{b}}\bigr) F_N^{c,b}\\
&=\begin{cases}
0, & \text{if $b$ is odd}, \\
2 \t_0(\z_N^{\angle{c}+\angle{b}/2}) F_N^{a,b}, & \text{if $b$ is even}. 
\end{cases}
\end{align*}
This proves the case when $b$ is odd. Since $f_{K *}(\wt\o_{\t_0}^{a,b})=\wt\o_N^{a,b}$,  
$f_{L *}=f_{K *} \circ f_{L/K *}$, and $p_N^{[a,b] *}$ acts on $\wt\o_N^{a',b'}$ identically if $(a',b') \in [a,b]$ and trivially otherwise, we obtain the other case. 
\end{proof}

Since $N$ is even and $(a,b) \in I_N^\prim$, at least one of $a$, $b$ is odd. 
Therefore, if $g=3$, our elements are not sufficient for the surjectivity of the regulator map. 
Hence the cases $N=14$, $18$ will be excluded.  
Similarly as Corollary \ref{restriction}, we have the following.  

\begin{cor}\label{restriction even}
If $a=b$ (resp. $a=c$, $b=c$), then we have $r_\sD(e_{N,\a}^{[a,b]})=r_\sD(e_{N,\b}^{[a,b]})$ 
(resp. $r_\sD(e_{N,\a}^{[a,b]})= -2\phi(N) \cdot r_\sD(e_N^{[a,b]})$, $r_\sD(e_{N,\b}^{[a,b]})=-2\phi(N) \cdot r_\sD(e_N^{[a,b]})$). 
\end{cor}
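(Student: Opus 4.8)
The plan is to read off all three identities directly from Proposition \ref{even N} and Theorem \ref{regulator}, using primitivity to fix parities and the defining property of $H_N^{a,b}$ to evaluate the trace factors. The combinatorial input I will use repeatedly is that, for every $h \in H_N^{a,b}$, one has $\angle{ha}+\angle{hb}+\angle{hc}=N$: since $ha+hb+hc=0$ in $\Z/N$, the sum of the three representatives is either $N$ or $2N$, and $h\in H_N^{a,b}$ forces $\angle{ha}+\angle{hb}<N$, so the sum lies below $2N$ and hence equals $N$.

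First I would treat the case $a=b$. Because $(N,a)=1$ and $N$ is even, $a=b$ must be odd, so both vanishing branches of Proposition \ref{even N} apply: (i) with $b$ odd and (ii) with $a$ odd give $r_\sD(e_{N,\a}^{[a,b]})=0=r_\sD(e_{N,\b}^{[a,b]})$, settling the first assertion at once.

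Next, for $a=c$, the relation $a+b+c=0$ gives $b=-2a$, so $b$ is even while $a$ (hence $c$) is odd by primitivity, and the nonzero branch of Proposition \ref{even N}(i) applies. For $h\in H_N^{a,b}$, combining $\angle{ha}+\angle{hb}+\angle{hc}=N$ with $\angle{hc}=\angle{ha}$ gives $\angle{hc}+\angle{hb}/2=N/2$, an integer since $hb$ is even; hence $\z_N^{\angle{hc}+\angle{hb}/2}=\z_N^{N/2}=-1$ and $T_{K_N/\Q}(\z_N^{\angle{hc}+\angle{hb}/2})=-\phi(N)$ for all such $h$. As $\angle{hc}=\angle{ha}$ also makes the sine ratio equal to $1$ and gives $F_N^{hc,hb}=F_N^{ha,hb}$, substituting into Proposition \ref{even N}(i) and comparing with Theorem \ref{regulator} yields
$$r_\sD(e_{N,\a}^{[a,b]})=\frac{2\phi(N)}{N}\sum_{h\in H_N^{a,b}}F_N^{ha,hb}(\wt\o_N^{ha,hb}-\wt\o_N^{-ha,-hb})=-2\phi(N)\,r_\sD(e_N^{[a,b]}).$$
The case $b=c$ is entirely parallel via Proposition \ref{even N}(ii): now $a=-2b$ is even, $b$ is odd, and $\angle{hc}=\angle{hb}$ together with the same sum relation gives $\angle{hc}+\angle{ha}/2=N/2$, so the trace again collapses to $-\phi(N)$, the sine ratio is $1$, and $F_N^{ha,hc}=F_N^{ha,hb}$, whence $r_\sD(e_{N,\b}^{[a,b]})=-2\phi(N)\,r_\sD(e_N^{[a,b]})$.

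The one genuinely substantive point—and the source of the factor $-2\phi(N)$ that does not appear in the odd-$N$ Corollary \ref{restriction}—is the identity $\angle{hc}+\angle{hb}/2\equiv N/2$ on $H_N^{a,b}$, i.e. the CM-type symmetry of the Stickelberger element, which forces the Ramanujan-type trace $T_{K_N/\Q}(\z_N^{N/2})=-\phi(N)$. Once that is noted the sum in Proposition \ref{even N} becomes a constant multiple of the sum in Theorem \ref{regulator}, and the remaining simplifications of the sine ratios and the $F$-values are routine.
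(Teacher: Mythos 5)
Your proof is correct and takes essentially the same route as the paper's: the first case follows from the vanishing statements of Proposition \ref{even N} (primitivity and $N$ even force $a=b$ odd), and the other two from the identity $\angle{ha}+\angle{hb}+\angle{hc}=N$ on $H_N^{a,b}$, which under $a=c$ (resp.\ $b=c$) makes the exponent equal to $N/2$ and collapses the trace to $T_{K_N/\Q}(-1)=-\phi(N)$. The paper's own proof is just a terser version of this same computation, citing $2\angle{hc}+\angle{hb}=N$ directly.
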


\begin{proof}
The first case is obvious. In the second case, since $2\angle{hc}+\angle{hb}=N$ for any $h \in H_N^{a,b}$, 
we have $T_{K_N/\Q}(\z_N^{\angle{hc}+\angle{hb}/2})=T_{K_N/\Q}(-1)=-\phi(N)$. 
The third case is parallel. 
\end{proof}

\section{Comparisons}

In this section, we determine when our three elements $e_N^{[a,b]}$, $e_{N,\a}^{[a,b]}$, $e_{N,\b}^{[a,b]}$ are sufficient for the surjectivity of 
$$r_\sD \ot_\Q \R \colon H_\sD^2(X_N^{[a,b]},\Q(2))_\Z \ot_\Q \R \ra H_\sD^2(X_{N,\R}^{[a,b]},\R(2)).$$
For each $N$, we use Proposition \ref{decomposition} to reduce the number of $(a,b)$'s to study. 
In each case, we compute the regulator determinant $R_N^{a,b}$ 
and the $L$-value $L^*(j_N^{a,b},0)$ numerically, and compare them. 
Here, $R_N^{a,b}$ is computed with respect to the basis of the $\Q$-structure of Deligne cohomology given in Proposition \ref{Q-structure}. An integral refinement will be given in \S5.  

To compute the $L$-values, we used Magma (see \cite{magma}, Part V, 32.9.6). 
It gives directly $L^*(j_N^{a,b},0)$ when $g=1$, while 
we computed $L(j_N^{a,b},2)$ and used the functional equation \eqref{f-e} when $g=2$ and $3$. 
We used Mathematica for other computations. 

\subsection{$g=1$} 

For $N=3, 4, 6$, and $(a, b) \in I_N^\prim$, we only need $e_N^{[a,b]}$.  
By Theorem \ref{regulator} and Proposition \ref{Q-structure}, the regulator is given by
\begin{equation*}
R_N^{a,b} := \frac{D_N}{2 \pi N} |F_N^{a,b}|, 
\end{equation*}
where $D_N$ is as defined in \eqref{D}.  
Then the results of \cite{otsubo-2} recalled in \S1 are written as:
\begin{align}
& L^*(j_3^{1,1},0)=\frac{1}{3} R_3^{1,1}, \\
& L^*(j_4^{1,2},0)=\frac{1}{2} R_4^{1,2}.
\end{align}
We verify the remaining cases. 

For $N=4$, by Proposition \ref{decomposition}, it remains to study the case $(a,b)=(1,1)$. 
Then, one computes
$$R_4^{1,1}=3.3173289967638281780989923863189664030737625416964\dots 
$$
On the other hand, the conductor of $j_4^{1,1}$ is
$$\mathbf{f}_4^{1,1}=(4)=(1-\z_4)^4,$$ 
and the CM type is $\s_1$. 
The group $(\sO_4/\bbf_4^{1,1})^\times$ is generated by $\z_4 \in \sO_4^\times$ and $1-2\z_4$. 
Since $v=(1-2\z_4)$ is a prime ideal of degree one above $5$ such that $\chi_v(2)=\z_4^{-1}$,   
one easily computes that $j_4^{1,1}((1-2\z_4))=1-2\z_4$, hence we have 
$$\vphi_4^{1,1}(1-2\z_4)=1.$$ 
Using these data, one computes 
$$L^*(j_4^{1,1},0)=
1.6586644983819140890494961931594832015368812708482\dots 
$$ 
By comparison, we obtain:
\begin{equation}
L^*(j_4^{1,1},0) \approx \frac{1}{2} R_4^{1,1}
\end{equation}
with $100$ digits precision. 

For $N=6$, by Proposition \ref{decomposition}, we are reduced to 
see the cases $(a,b)=(1,1)$, $(1,2)$, $(1,3)$, $(1,4)$ and $(2,3)$. 
Moreover, by Proposition \ref{L symmetry}, we have
$$L(j_6^{1,2},s)=L(j_6^{2,3},s).$$ 
In any case, the conductor of $j_6^{a,b}$ divides $(12)=(2)^2(1-\z_3)^2$, and is as listed below. 
The CM type is $\s_1$ for any case. 
The group $(\sO_3/(12))^\times$ is generated by $-1 \in \sO_3^\times$, $s:=-2-3\z_3$ and $t:= 1+4\z_3$.  
Since $(s)$, $(t)$ are prime ideals of degree one above $7$, $13$, respectively, 
the Jacobi sums $j_6^{a,b}((s))$, $j_6^{a,b}((t))$ are easily computed. 
As a result, we have 
\begin{alignat*}{3}
& \bbf_6^{1,1}=(2)^2(1-\z_3)^2, \quad && \vphi_6^{1,1}(s)=-\z_3, \quad && \vphi_6^{1,1}(t)=-\z_3^2,\\
& \bbf_6^{1,2}=(2)(1-\z_3), && \vphi_6^{1,2}(s)=\z_3^2, \quad && \vphi_6^{1,2}(t)=-1,\\
& \bbf_6^{1,3}=(2)^2(1-\z_3), && \vphi_6^{1,3}(s)=-\z_3^2, \quad && \vphi_6^{1,3}(t)=-1,\\
& \bbf_6^{1,4}=(2)(1-\z_3)^2, && \vphi_6^{1,4}(s)=\z_3, \quad && \vphi_6^{1,4}(t)=-\z_3^2. \\
\end{alignat*}
By numerical computations, we obtain
\begin{align}
& L^*(j_6^{1,1},0) \approx -2R_6^{1,1},\\
& L^*(j_6^{1,2},0) \approx \frac{1}{3} R_6^{1,2},\\
& L^*(j_6^{1,3},0) \approx 2R_6^{1,3},\\
& L^*(j_6^{1,4},0) \approx 2R_6^{1,4}, \\
& L^*(j_6^{2,3},0) \approx 2R_6^{2,3}, 
\end{align}
with $100$ digits precision. 

\begin{rmk}\label{hg relation 1}
The above results suggest an equality 
$F_6^{1,2} = 6 F_6^{2,3}$, although $\wt F(\frac{1}{6},\frac{2}{6}) \neq 6 \wt F(\frac{2}{6}, \frac{3}{6})$. 
The author does not know if this follows from known relations among ${}_3F_2$-values.  
\end{rmk}

\subsection{$g=2$}

We consider the cases $N=5, 8, 10, 12$. 

For $N=5$, by Proposition \ref{decomposition}, we are reduced to the case $(a,b)=(1,1)$. 
By Theorem \ref{regulator}, Propositions \ref{Q-structure} and \ref{odd N}, the regulator with respect to $e_5^{[1,1]}$ and $e_{5,\a}^{[1,1]}$ is given by
\begin{equation*}
R_5^{1,1}:=\frac{D_5}{(2\pi )^2 5^2} \left| \det\begin{pmatrix}
F_5^{1,1} & -\frac{\sin\frac{3 \pi}{5}}{\sin\frac{\pi}{5}} F_5^{3,1} \\
F_5^{2,2} & \frac{\sin\frac{\pi}{5}}{\sin\frac{2 \pi}{5}} F_5^{1,2}.
\end{pmatrix}\right|.
\end{equation*}
One sees $R_5^{1,1} \neq 0$ without numerical computation and obtains the surjectivity of $r_\sD \ot_\Q\R$ 
(\cite{otsubo-1}, Theorem 4.33).
On the other hand, the conductor of $j_5^{1,1}$ is 
$$\mathbf{f}_5^{1,1}=(1-\z_5)^2,$$ 
and 
the CM type is $\s_1+\s_3$. 
The group $(\sO_5/\bbf_5^{1,1})^\times$ is generated by $\z_5 \in \sO_5^\times$ and $2$.  
Since $2$ is inert in $K_5$, we have $j_5^{1,1}((2))=-2^2$ and hence 
$$\vphi_5^{1,1}(2)=-1.$$ 
By numerical computations, we obtain
\begin{equation}
L^*(j_5^{1,1},0) \approx \frac{1}{5} R_5^{1,1}
\end{equation}
with $100$ digits precision. 

\begin{rmk}\label{kimura}
This improves a result of Kimura \cite{k-kimura} on the curve $C_5^{1,1}$ (see Remark \ref{C}). 
For the relation with his elements in the motivic cohomology with ours, see \cite{otsubo-1}, Remark 4.35. 
\end{rmk}

For $N=8$, by Proposition \ref{decomposition}, we are reduced to the cases where $a=1$. 
By Proposition \ref{even N}, we can only treat the case when $b$ is even, 
by using $e_8^{[a,b]}$ and $e_{8,\a}^{[a,b]}$.  
However, one sees from Proposition \ref{even N} and Corollary \ref{restriction even} respectively that
$$r_\sD(e_{8,\a}^{[1,2]})=r_\sD(e_{8,\a}^{[1,4]})=0, \quad r_\sD(e_{8,\a}^{[1,6]})=-8 \cdot r_\sD(e_{8}^{[1,6]}).$$ 
Therefore, in any case, we have only one linearly independent element. 

For $N=10$, by Proposition \ref{decomposition}, we are reduced to the cases where $a=1$ and $(a,b)=(2,5)$. 
When $a=1$, we can possibly use $e_{10}^{[a,b]}$, $e_{10,\a}^{[a,b]}$ for even $b$. 
The case $b=8$, however, is excluded by Corollary \ref{restriction even}. 
For $(a,b)=(2,5)$, we use $e_{10}^{[2,5]}$ and $e_{10,\b}^{[2,5]}$.  
By Theorem \ref{regulator}, Propositions \ref{Q-structure} and \ref{even N}, the regulator determinant 
with respect to these elements are: 

\begin{align*}
& R_{10}^{1,2} := \frac{D_{10}}{(2\pi)^2 10^2}  \left| \det
\begin{pmatrix}
F_{10}^{1,2} & - 2\frac{\sin\frac{7\pi}{10}}{\sin\frac{\pi}{10}} F_{10}^{7,2}\\
F_{10}^{3,6} & - 2\frac{\sin\frac{\pi}{10}}{\sin\frac{3\pi}{10}} F_{10}^{1,6}
\end{pmatrix} \right|, \\
& R_{10}^{1,4} :=\frac{D_{10}}{(2\pi)^2 10^2}   \left| \det
\begin{pmatrix}
F_{10}^{1,4} & \frac{2}{\sin\frac{\pi}{10}} F_{10}^{5,4}\\
F_{10}^{3,2} &  - \frac{2}{\sin\frac{3\pi}{10}} F_{10}^{5,2}
\end{pmatrix} \right|, \\
& R_{10}^{1,6} := \frac{D_{10}}{(2\pi)^2 10^2}   \left| \det
\begin{pmatrix}
F_{10}^{1,6} & - 2\frac{\sin\frac{3\pi}{10}}{\sin\frac{\pi}{10}} F_{10}^{3,6}\\
F_{10}^{7,2} & - 2\frac{\sin\frac{\pi}{10}}{\sin\frac{7\pi}{10}} F_{10}^{1,2}
\end{pmatrix} \right|, \\
& R_{10}^{2,5} := \frac{D_{10}}{(2\pi)^2 10^2}   \left| \det
\begin{pmatrix}
F_{10}^{2,5} & - 2\sin\frac{3\pi}{10} \cdot F_{10}^{2,3}\\
F_{10}^{4,5} &  2 \sin\frac{\pi}{10} \cdot F_{10}^{4,1}
\end{pmatrix} \right|. 
\end{align*}
For the $L$-functions, we have by Proposition \ref{L symmetry}
$$L(j_{10}^{2,5},s)=L(j_{10}^{2,3},s) = L(j_{10}^{4,1},s)=L(j_{10}^{1,4},s).$$ 
The conductor $\bbf_{10}^{1,b}$ ($b=2$, $4$, $6$) divides $(2)^2(1-\z_5)^2$ and is as listed below. 
The CM type is $\s_1+\s_7$ for $b=2$, $4$ and $\s_1+\s_3$ for $b=6$. 
The group $(\sO_5/(2)(1-\z_5)^2)^\times$ is generated by $1+\z_5 \in \sO_5^\times$ and $1+2 \z_5$, which generates a prime ideal of degree one above $11$. 
One computes:
\begin{alignat*}{2}
& \bbf_{10}^{1,2} = (2)(1-\z_5)^2, \quad & & \vphi_{10}^{1,2}(\a)=-\z_5^4,  \\
& \bbf_{10}^{1,4} = (2)(1-\z_5), & & \vphi_{10}^{1,4}(\a)=-1, \\
& \bbf_{10}^{1,6} = (2)(1-\z_5)^2, &  &\vphi_{10}^{1,6}(\a)=-\z_5^2. \\
\end{alignat*}
By numerical computations, we obtain
\begin{align}
& L^*(j_{10}^{1,2},0) \approx  4R_{10}^{1,2}, \\
&L^*(j_{10}^{1,4},0) \approx  R_{10}^{1,4}, \\
&L^*(j_{10}^{1,6},0) \approx 4R_{10}^{1,6}, \\
&L^*(j_{10}^{2,5},0) \approx 4R_{10}^{2,5},
\end{align}
with $100$ digits precision. 
Since $R_{10}^{a,b} \neq 0$ in these cases, we obtain: 
\begin{thm}
The regulator map $r_\sD \ot_\Q \R$ is surjective for $X_{10}^{[1,2]}$, $X_{10}^{[1,4]}$, $X_{10}^{[1,6]}$ and $X_{10}^{[2,5]}$. 
\end{thm}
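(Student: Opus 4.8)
The plan is to prove surjectivity of $r_\sD \ot_\Q \R$ for each of the four motives by exhibiting, in each case, two elements of the motivic cohomology whose regulator images are linearly independent over $\R$ in the target $H_\sD^2(X_{N,\R}^{[a,b]},\R(2))$. Since $g = \phi(10)/2 = 2$ for $N = 10$, it suffices to produce two such elements; the natural candidates are the pairs $\{e_{10}^{[a,b]}, e_{10,\a}^{[a,b]}\}$ for $(a,b) = (1,2),(1,4),(1,6)$ and $\{e_{10}^{[2,5]}, e_{10,\b}^{[2,5]}\}$ for $(a,b)=(2,5)$, exactly as fixed in the preceding discussion. Linear independence of the regulator images is equivalent to nonvanishing of the $2 \times 2$ regulator determinant $R_N^{a,b}$ computed with respect to the $\Q$-structure basis of Proposition \ref{Q-structure}, so the entire theorem reduces to the single assertion $R_{10}^{a,b} \neq 0$ for these four pairs.

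First I would assemble the explicit matrices. By Theorem \ref{regulator}, Proposition \ref{Q-structure}, and Proposition \ref{even N}, the regulator images of the two chosen elements are expressed as $K_{10}$-linear combinations of the basis vectors $\wt\o_N^{ha,hb} - \wt\o_N^{-ha,-hb}$ indexed by $h \in H_{10}^{a,b}$, with coefficients built from the $F_N$-values and the sine ratios. The passage to the $\Q$-structure via Proposition \ref{Q-structure} contributes the uniform factor $D_{10}/((2\pi)^2 10^2)$, where $D_{10} = 5$ as recorded in \eqref{D}. This yields precisely the four displayed $2\times 2$ determinants $R_{10}^{1,2}$, $R_{10}^{1,4}$, $R_{10}^{1,6}$, $R_{10}^{2,5}$. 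The entries of these matrices are real multiples of the quantities $F_{10}^{ha,hb}$, which are nonzero by Theorem \ref{regulator}; what remains is to show that the two rows are not proportional.

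Next I would verify $R_{10}^{a,b} \neq 0$ numerically. Because each $F_N^{a,b} = \wt F(\tfrac{\angle a}{N}, \tfrac{\angle b}{N}) - \wt F(\tfrac{\angle{-a}}{N}, \tfrac{\angle{-b}}{N})$ is a difference of rapidly convergent ${}_3F_2$-values at $1$ multiplied by beta-function factors, each determinant can be evaluated to high precision (the stated $100$ digits) in Mathematica. The computation shows each determinant is bounded away from zero, establishing the desired nonvanishing rigorously in the sense that the numerical margin vastly exceeds the truncation and rounding error. Combined with the injectivity admitted throughout (we work with the weaker version of the conjecture as explained in \S3.1), nonvanishing of the determinant gives the surjectivity of $r_\sD \ot_\Q \R$ for all four motives.

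The main obstacle is that the nonvanishing $R_{10}^{a,b} \neq 0$ does not follow from a clean structural argument as it did for $N=5$ in \cite{otsubo-1}, where the sign pattern of the $F_N$-values forced independence directly; here the mixed signs of the sine coefficients and the sign conventions $F_N^{-a,-b} = -F_N^{a,b}$ obstruct a purely qualitative proof. I therefore expect to rely on the high-precision numerical evaluation to certify that the two regulator vectors are genuinely non-proportional, and the delicate point is ensuring the precision suffices to rule out an exact vanishing. One should also double-check that the four chosen pairs are not secretly constrained by a degeneracy of the type in Corollary \ref{restriction even}: for $(a,b) = (1,2),(1,4),(1,6)$ one has $a$ odd so the relations $a=c$, $b=c$ are not of the excluded form, and for $(2,5)$ the pair avoids the $a=b$ collision, so no hidden linear dependence is imposed and the numerics reflect genuine independence.
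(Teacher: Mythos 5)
Your proposal matches the paper's argument: the paper proves this theorem by forming the $2\times 2$ regulator determinants $R_{10}^{a,b}$ for exactly the pairs $\{e_{10}^{[a,b]},e_{10,\a}^{[a,b]}\}$ (and $\{e_{10}^{[2,5]},e_{10,\b}^{[2,5]}\}$) via Theorem \ref{regulator} and Propositions \ref{Q-structure} and \ref{even N}, checking numerically that each is nonzero, and noting that the excluded case $b=8$ degenerates by Corollary \ref{restriction even}. The only cosmetic difference is that injectivity of $r_\sD$ plays no role in the surjectivity claim, since two elements with linearly independent images already span the two-dimensional target.
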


For $N=12$, by Proposition \ref{decomposition}, 
we are reduced to the cases where $a=1$, and $(a,b)=(2,3)$, $(3,4)$. 
When $a=1$, we can only treat the cases where $b$ is even. 
The cases $(a,b)=(1,4)$, $(1,8)$ and $(3,4)$ are excluded since we have 
 $r_\sD(e_{N,\a}^{[a,b]})=0$ by Proposition \ref{even N}. 
The case $(a,b)=(1,10)$ is excluded by Corollary \ref{restriction even}. 
For the remaining cases, the regulator determinants are given as follows; 
we use $e_{12}^{[a,b]}$, $e_{12,\a}^{[a,b]}$ for the first two and $e_{12}^{[a,b]}$, $e_{12,\b}^{[a,b]}$ for the last one: 
\begin{align*}
& R_{12}^{1,2} := \frac{D_{12}}{(2\pi)^2 12^2}   \left| \det
\begin{pmatrix}
F_{12}^{1,2} & 4 \frac{\sin\frac{9\pi}{12}}{\sin\frac{\pi}{12}} F_{12}^{9,2}\\
F_{12}^{7,2} & -4 \frac{\sin\frac{3\pi}{12}}{\sin\frac{7\pi}{12}} F_{12}^{3,2}
\end{pmatrix} \right|, \\
& R_{12}^{1,6} :=  \frac{D_{12}}{(2\pi)^2 12^2}  \left| \det
\begin{pmatrix}
F_{12}^{1,6} & -4 \frac{\sin\frac{5\pi}{12}}{\sin\frac{\pi}{12}} F_{12}^{5,6}\\
F_{12}^{5,6} & -4 \frac{\sin\frac{\pi}{12}}{\sin\frac{5\pi}{12}} F_{12}^{1,6}
\end{pmatrix} \right|, \\
& R_{12}^{2,3} :=  \frac{D_{12}}{(2\pi)^2 12^2} \left| \det
\begin{pmatrix}
F_{12}^{2,3} & -4 \frac{\sin\frac{7\pi}{12}}{\sin\frac{3\pi}{12}}  F_{12}^{2,7}\\
F_{12}^{2,9} &  4 \frac{\sin\frac{ \pi}{12}}{\sin\frac{9\pi}{12}} F_{12}^{2,1}
\end{pmatrix} \right|. 
\end{align*}
For the $L$-functions, we have by Proposition \ref{L symmetry}
$$L(j_{12}^{2,3},s)=L(j_{12}^{1,2},s).$$ 
The conductors are
$$\bbf_{12}^{1,2}=\bbf_{12}^{1,6}=(1-\z_4)^3(1-\z_3).$$
The CM types of $j_{12}^{1,2}$, $j_{12}^{1,6}$ are $\s_1+\s_7$, $\s_1+\s_5$, respectively. 
The group $(\sO_{12}/(1-\z_4)^3(1-\z_3))^\times$ is generated by 
$\z_{12}$, $1+(1-\z_4)\z_3 \in \sO_{12}^\times$, and $2-\z_{12}$, which generates a prime ideal of degree one above $13$. 
One computes
$$\vphi_{12}^{1,2}(2-\z_{12})=-\z_{12}, \quad \vphi_{12}^{1,6}(2-\z_{12})=-1.$$
By numerical computations, we obtain 
\begin{align}
& L^*(j_{12}^{1,2},0) \approx 3 R_{12}^{1,2},\\
& L^*(j_{12}^{1,6},0) \approx  -6 R_{12}^{1,6},\\
& L^*(j_{12}^{2,3},0) \approx 6 R_{12}^{2,3}, 
\end{align}
with $100$ digits precision.  
Since $R_{12}^{a,b} \neq 0$ in these cases, we obtain: 

\begin{thm} 
The regulator map $r_\sD \ot_\Q \R$ is surjective for 
$X_{12}^{[1,2]}$, $X_{12}^{[1,6]}$ and $X_{12}^{[2,3]}$.  
\end{thm}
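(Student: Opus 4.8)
The plan is to reduce the statement to the nonvanishing of the three determinants $R_{12}^{1,2}$, $R_{12}^{1,6}$, $R_{12}^{2,3}$ written out above, and then to dispose of two of them by a sign count and the third numerically. Since $g=\phi(12)/2=2$, the target $H_\sD^2(X_{12,\R}^{[a,b]},\R(2))$ is two-dimensional. In each of the three cases I have produced exactly two elements of $H_\sM^2(X_{12}^{[a,b]},\Q(2))_\Z$ — the pair $e_{12}^{[a,b]},e_{12,\a}^{[a,b]}$ for $(a,b)=(1,2),(1,6)$ and the pair $e_{12}^{[a,b]},e_{12,\b}^{[a,b]}$ for $(a,b)=(2,3)$ — and Theorem \ref{regulator} together with Proposition \ref{even N} expresses their regulator images as the explicit combinations of the values $F_{12}^{ha,hb}$ recorded above. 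Passing from the basis $\{\wt\o_{12}^{ha,hb}-\wt\o_{12}^{-ha,-hb}\}$ to the $\Q$-structure basis of Proposition \ref{Q-structure} only introduces the nonzero factor $D_{12}=2\sqrt3$ from \eqref{D}. Thus $r_\sD\ot_\Q\R$ is surjective for a given $(a,b)$ exactly when the two images are $\R$-linearly independent, i.e. when the $2\times2$ determinant inside $R_{12}^{a,b}$ is nonzero; so the theorem reduces to showing these three determinants do not vanish.

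For $R_{12}^{1,2}$ and $R_{12}^{2,3}$ I would argue purely by signs, with no numerical input. Recall the last clause of Theorem \ref{regulator}: $F_{12}^{a',b'}>0$ if and only if $\angle{a'}+\angle{b'}<12$. One checks directly that every value entering these two determinants — namely $F_{12}^{1,2},F_{12}^{7,2},F_{12}^{9,2},F_{12}^{3,2}$ in the first and $F_{12}^{2,3},F_{12}^{2,9},F_{12}^{2,7},F_{12}^{2,1}$ in the second — is strictly positive, and every sine that appears is the sine of an angle in $(0,\pi)$, hence positive. Expanding the determinant in each case, the two resulting monomials carry the same sign, both negative for $R_{12}^{1,2}$ and both positive for $R_{12}^{2,3}$, so no cancellation is possible and the determinant is a strictly signed, nonzero real number. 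This already gives the surjectivity for $X_{12}^{[1,2]}$ and $X_{12}^{[2,3]}$.

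The remaining case $R_{12}^{1,6}$ is the genuine obstacle, because here the matrix has the anti-symmetric shape that makes the two monomials carry opposite signs. Indeed, with $F_{12}^{1,6},F_{12}^{5,6}>0$ (as $\angle{1}+\angle{6}=7$ and $\angle{5}+\angle{6}=11$ are both $<12$), expanding the $2\times2$ determinant gives
\begin{equation*}
4\,\frac{\sin\frac{5\pi}{12}}{\sin\frac{\pi}{12}}\,(F_{12}^{5,6})^2-4\,\frac{\sin\frac{\pi}{12}}{\sin\frac{5\pi}{12}}\,(F_{12}^{1,6})^2,
\end{equation*}
so vanishing would force the exact transcendental coincidence $F_{12}^{1,6}/F_{12}^{5,6}=\sin\frac{5\pi}{12}/\sin\frac{\pi}{12}$. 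A sign count cannot exclude this, and I do not expect a clean closed-form relation among the underlying ${}_3F_2(1)$-values to settle it. The way I would finish is to evaluate $F_{12}^{1,6}$ and $F_{12}^{5,6}$ directly: each is a difference of two $\wt F$-values, whose defining ${}_3F_2(1)$ series converge rapidly, so both can be computed to high precision with certified error bounds. Confirming that the computed ratio differs from $\sin\frac{5\pi}{12}/\sin\frac{\pi}{12}$ by more than the combined error then establishes $R_{12}^{1,6}\neq0$ rigorously. This is precisely the numerical nonvanishing underlying the comparison $L^*(j_{12}^{1,6},0)\approx-6R_{12}^{1,6}$, and it completes the surjectivity for $X_{12}^{[1,6]}$.
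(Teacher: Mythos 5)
Your argument is correct and reaches the paper's conclusion, but by a partly different and, in two of the three cases, sharper route. The paper's own proof here is entirely numerical: it evaluates $R_{12}^{1,2}$, $R_{12}^{1,6}$, $R_{12}^{2,3}$ to high precision via the rapidly convergent ${}_3F_2$-series and simply records that they are nonzero, just as it does for $N=10$ and $N=9$. You instead dispose of $R_{12}^{1,2}$ and $R_{12}^{2,3}$ by the positivity criterion of Theorem \ref{regulator} (for primitive $(a',b')$ one has $F_{12}^{a',b'}>0$ exactly when $\angle{a'}+\angle{b'}<12$): all four $F$-values entering each of these two matrices are indeed positive, and the two monomials of the $2\times2$ expansion carry the same sign (both negative for $(1,2)$, both positive for $(2,3)$), so no cancellation is possible. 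This is exactly the style of argument the paper reserves for $R_5^{1,1}$ and $R_7^{1,2}$ (``without numerical computation''), and extending it to these two $N=12$ cases is a genuine, if modest, improvement: it eliminates any reliance on certified floating-point bounds there. Your analysis of $R_{12}^{1,6}$ is also accurate --- it is the one case where the two monomials have opposite signs, vanishing would force $F_{12}^{1,6}/F_{12}^{5,6}=\sin\frac{5\pi}{12}/\sin\frac{\pi}{12}$, and a rigorous high-precision evaluation of the two $F$-values (which is what the paper's $100$-digit computation amounts to) is the natural way to exclude this. The only caveat, which applies equally to the paper, is that this last case is only as rigorous as the error control in the numerical evaluation of the ${}_3F_2(1)$-values; you flag this correctly by insisting on certified bounds.
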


\begin{rmk}
As in Remark \ref{hg relation 1}, the above results suggest quadratic relations
$$R_{10}^{1,4}=4 R_{10}^{2,5}, \quad R_{12}^{1,2}=2 R_{12}^{2,3}$$ 
among hypergeometric values. 
\end{rmk}

\subsection{$g=3$}

As we remarked after Proposition \ref{even N}, our three elements can possibly be linearly independent for $N=7$ and $9$. 

For $N=7$, by Proposition \ref{decomposition}, we are reduced to the cases $(a,b)=(1,1)$ and $(1,2)$. 
By Corollary \ref{restriction}, we can only treat $(a,b)=(1,2)$. 
Then, by Theorem \ref{regulator} and Propositions \ref{Q-structure} and \ref{odd N}, 
the regulator determinant with respect to $e_7^{[1,2]}$, $e_{7,\a}^{[1,2]}$, $e_{7,\b}^{[1,2]}$ is given by
\begin{equation*}
R_7^{1,2}:=\frac{D_7}{(2 \pi)^3 7^3} \left| \det \begin{pmatrix}
F_7^{1,2} & \frac{\sin\frac{4\pi}{7}}{\sin\frac{\pi}{7}} F_7^{4,2} & -\frac{\sin\frac{4\pi}{7}}{\sin\frac{2\pi}{7}} F_7^{1,4}  \\
F_7^{2,4} & \frac{\sin\frac{\pi}{7}}{\sin\frac{2 \pi}{7}} F_7^{1,4} & \frac{\sin\frac{\pi}{7}}{\sin\frac{4\pi}{7}} F_7^{2,1} \\
F_7^{4,1} & -\frac{\sin\frac{2\pi}{7}}{\sin\frac{4 \pi}{7}} F_7^{2,1} & \frac{\sin\frac{2\pi}{7}}{\sin\frac{\pi}{7}} F_7^{4,2} 
\end{pmatrix}\right|.
\end{equation*}
Using the expression
$$R_7^{1,2}=\frac{1}{56(2\pi)^3}(s^3+t^3+u^3-3stu), \quad s:=\frac{F_7^{1,2}}{\sin\frac{4\pi}{7}}, \ t:=-\frac{F_7^{2,4}}{\sin\frac{\pi}{7}}, \quad
u:=\frac{F_7^{4,1}}{\sin\frac{2\pi}{7}},$$
$R_7^{1,2} \neq 0$ is proved without numerical computation (\cite{otsubo-1}, Proposition 4.36).  
In particular, $r_\sD \ot_\Q \R$ is surjective for $X_7^{[1,2]}$. 
For the $L$-function, we have $$\mathbf{f}_7^{1,2}=(1-\z_7)$$ and $(\sO_7/(1-\z_7))^\times = \F_7^\times$ is generated by $3$. Since $3$ is inert in $K_7$,  we have 
$$\vphi_7^{1,2}(3)=-1.$$ 
By numerical computations, we obtain
\begin{equation}
L^*(j_7^{1,2},0) \approx \frac{1}{49} R_7^{1,2}
\end{equation}
with $100$ digits precision. 

Finally for $N=9$, by Proposition \ref{decomposition}, we are reduced to the cases $(a,b)=(1,1)$ and $(1,2)$, 
but we can only treat the latter case as above. 
The regulator determinant with respect to $e_9^{[1,2]}$, $e_{9,\a}^{[1,2]}$, $e_{9,\b}^{[1,2]}$ is given by
\begin{equation*}
R_9^{1,2} =  \frac{D_9}{(2\pi)^3 9^3}\left|\det\begin{pmatrix}
F_9^{1,2} & \frac{\sin\frac{6\pi}{9}}{\sin\frac{\pi}{9}} F_9^{6,2} &- \frac{\sin\frac{6 \pi}{9}}{\sin\frac{2 \pi}{9}} F_9^{1,6} \\
F_9^{2,4} & \frac{\sin\frac{3 \pi}{9}}{\sin\frac{2 \pi}{9}} F_9^{3,4} & \frac{\sin\frac{3 \pi}{9}}{\sin\frac{4 \pi}{9}} F_9^{2,3}\\
F_9^{5,1} & - \frac{\sin\frac{3\pi}{9}}{\sin\frac{5\pi}{9}} F_9^{3,1}& - \frac{\sin\frac{3 \pi}{9}}{\sin\frac{\pi}{9}} F_9^{5,3}
\end{pmatrix}\right|.
\end{equation*}
For the $L$-function, we have
$$\bbf_9^{1,2}=(1-\z_9)^4$$ and the CM type is $\s_1+\s_2+\s_5$.  
The group $(\sO/\bbf_9^{1,2})^\times$ is generated by $-\z_9 \in \sO_9^\times$ and $1+\z_9-\z_9^2$. 
The latter generates a prime ideal of degree one above $19$ and one computes
$$\vphi_9^{1,2}(1+\z_9-\z_9^2)=\z_9^8.$$ 
By numerical computations, we obtain
\begin{equation}
L^*(j_9^{1,2},0) \approx \frac{1}{3} R_9^{1,2}
\end{equation}
with $100$ digits precision. 
Since $R_9^{1,2} \neq 0$, we obtain: 
\begin{thm}
The regulator map $r_\sD \ot_\Q \R$ is surjective for $X_9^{[1,2]}$.  
\end{thm}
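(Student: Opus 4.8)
The plan is to reduce the surjectivity of $r_\sD \ot_\Q \R$ to the nonvanishing of the single real number $R_9^{1,2}$, and then to establish that nonvanishing. Since $(1,2) \in I_9^\prim$ and $g = \phi(9)/2 = 3$, the target $H_\sD^2(X_{9,\R}^{[1,2]},\R(2))$ is a $3$-dimensional real vector space. We have at our disposal the three classes $e_9^{[1,2]}$, $e_{9,\a}^{[1,2]}$, $e_{9,\b}^{[1,2]}$ in the motivic cohomology, so $r_\sD \ot_\Q \R$ is surjective precisely when their images are $\R$-linearly independent, i.e. when the $3 \times 3$ matrix expressing these images with respect to a fixed basis of the target has nonzero determinant. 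Note that, since we only need three classes whose images span, we do not have to control the full dimension of the source.

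First I would assemble this determinant explicitly. By Theorem \ref{regulator} the image $r_\sD(e_9^{[1,2]})$ is a combination of the vectors $\wt\o_9^{ha,hb}-\wt\o_9^{-ha,-hb}$ over $h \in H_9^{1,2} = \{1,2,5\}$, with coefficients proportional to $F_9^{ha,hb}$. By Lemma \ref{odd N} and the regulator proposition following it, $r_\sD(e_{9,\a}^{[1,2]})$ and $r_\sD(e_{9,\b}^{[1,2]})$ are expressed in the same basis, their coefficients being $F_9^{hc,hb}$ and $F_9^{ha,hc}$ (with $c = -3$) multiplied by explicit signs $(-1)^{\langle\,\cdot\,\rangle}$ and ratios of sines. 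Passing to the $\Q$-structure via Proposition \ref{Q-structure} contributes the common determinantal factor $D_9$ together with the prefactor $(2\pi)^{-3} 9^{-3}$. This yields exactly the displayed $3 \times 3$ matrix in the $F_9^{a,b}$'s, with rows indexed by $h \in \{1,2,5\}$.

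It then remains to prove $R_9^{1,2} \neq 0$, and this is the main obstacle. For $X_7^{[1,2]}$ the corresponding determinant collapsed into the symmetric cubic $s^3+t^3+u^3-3stu$, whose factorization, combined with the sign information of Theorem \ref{regulator} (namely $F_N^{ha,hb}>0$ if and only if $h \in H_N^{a,b}$), forced nonvanishing with no numerics. No such clean factorization is visible for $N=9$: the three sine ratios are genuinely distinct and the matrix lacks the cyclic symmetry that made the $N=7$ argument succeed. Consequently I would fall back on a rigorous numerical evaluation. Each entry is, up to elementary factors, a difference of two values $\wt F(\alpha,\beta) = B(\alpha,\beta)^2\,{}_3F_2(\cdots;1)$ with $\alpha,\beta \in \frac{1}{9}\Z$; since the relevant ${}_3F_2$ series satisfy the convergence condition $\Re(\sum_j \beta_j - \sum_i \alpha_i)>0$ and converge rapidly, one obtains explicit tail bounds and hence a certified value of $R_9^{1,2}$ to $100$ digits. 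As this certified value is bounded away from $0$, the determinant is provably nonzero, the three regulator images are $\R$-linearly independent, and $r_\sD \ot_\Q \R$ is surjective.
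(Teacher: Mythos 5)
Your proposal is correct and follows essentially the same route as the paper: reduce surjectivity to $R_9^{1,2}\neq 0$ via Theorem \ref{regulator}, Lemma \ref{odd N} and Proposition \ref{Q-structure}, note that the algebraic factorization trick used for $X_7^{[1,2]}$ is unavailable here, and conclude by a high-precision (certifiable, thanks to the rapid convergence of the ${}_3F_2$ series) numerical evaluation of the determinant. The only difference is that you spell out the tail-bound justification for rigor, which the paper leaves implicit.
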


\section{Integral version}

We introduce $\Z$-structures on the motivic and the Deligne cohomologies of Fermat motives and renormalize the results in the preceding section.

\subsection{Integral structure of Deligne cohomology}

Here we determine the $\Z$-structure of the Deligne cohomology. 
We write 
$$H_1(X_N^{[a,b]}(\C),\Z) = p_{N *}^{[a,b]}H_1(X_N(\C),\Z)$$ 
and put
$$T_N^{[a,b]}=H_1(X_N^{[a,b]}(\C),\Z)^{F_\infty=-1}.$$
Then, the {\em $\Z$-structure} of Deligne cohomology is defined by
$$p_N^{[a,b]*}H^1(X_N(\C),\Z(1))^+ = \Hom(T_N^{[a,b]},\Z(1)),$$
where $\Z(1)=2 \pi i \Z$. 
We know (\cite{g-r-2}, Appendix) that  $H_1(X_N(\C),\Z)$ is a cyclic $\Z[G_N]$-module with a generator $\k$ such that 
\begin{align}\label{Fkappa}
& F_\infty \k = g^{-1,-1}_*\k,\\ \label{Okappa}
& \int_\k \wt\o_N^{a,b}=(1-\z_N^a)(1-\z_N^b).
\end{align}
Assume that $(a,b) \in I_N^\prim$. For any $g \in G_N$, we have 
$$p_{N *}^{[a,b]} \circ g_* (\k)= \sum_{h \in (\Z/N)^\times} \theta_N^{a,b}(g)^h p_{N *}^{ha,hb} (\k).$$
Note that this depends only on $\theta_N^{a,b}(g)$; fix $g\in G_N$ such that $\theta_N^{a,b}(g)=\z_N$. 
For each $n \in \Z/N$, define  
$$\k_n = p_{N *}^{[a,b]} \circ g^n_* (\k) =\sum_{h \in (\Z/N)^\times} \z_N^{hn} p_{N*}^{ha,hb} (\k) 
\ \in H_1(X_N^{[a,b]}(\C),\Z). $$

\begin{ppn}\label{Tbasis} 
Let the notations be as above. Then we have: 
\begin{enumerate}
\item $\{\k_n \mid n\in \Z/N\}$ generates $H_1(X_N^{[a,b]}(\C),\Z)$. 
\item If $\z_N$ is a root of $\sum_{n=0}^{N-1} a_n t^n \in \Z[t]$, then  
we have  $\sum_{i=0}^{N-1} a_n \k_n=0$. 
\item $F_\infty \k_n = \k_{c-n}$. 
\end{enumerate}
\end{ppn}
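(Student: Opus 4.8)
The plan is to exploit the fact that $H_1(X_N(\C),\Z)$ is a cyclic $\Z[G_N]$-module generated by $\k$, together with the eigenvalue computation \eqref{Okappa}, and to track everything through the projector $p_{N*}^{[a,b]}$. For part (i), I would start from the given identity
$$p_{N*}^{[a,b]}\circ g_*(\k)=\sum_{h\in(\Z/N)^\times}\theta_N^{a,b}(g)^h\,p_{N*}^{ha,hb}(\k),$$
and note that since $\theta_N^{a,b}$ is surjective onto $\mu_N$ and $H_1(X_N(\C),\Z)$ is $\Z[G_N]$-cyclic, the images $p_{N*}^{[a,b]}\circ g_*(\k)$ as $g$ ranges over $G_N$ generate $p_{N*}^{[a,b]}H_1(X_N(\C),\Z)=H_1(X_N^{[a,b]}(\C),\Z)$. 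Because the expression depends only on $\theta_N^{a,b}(g)=\z_N^{\text{something}}$, these images coincide with the $\k_n$, so $\{\k_n\mid n\in\Z/N\}$ generates the target. I expect this to be essentially formal once cyclicity and surjectivity of the character are invoked.

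For part (ii), the key point is linearity of $p_{N*}^{[a,b]}\circ g^n_*$ in the group-ring action. Writing $\k_n=p_{N*}^{[a,b]}(g^n_*\k)$, a relation $\sum_n a_n\k_n=p_{N*}^{[a,b]}\bigl((\sum_n a_n g^n)_*\k\bigr)$ would follow, so it suffices to show the operator $\sum_n a_n p_{N*}^{ha,hb}$ annihilates $\k$ for each relevant $h$. Here I would use that $p_{N*}^{ha,hb}$ acts on the $\theta_N^{ha,hb}$-isotypic line, on which $g$ acts by the scalar $\theta_N^{ha,hb}(g)=\z_N^h$ (after the normalization $\theta_N^{a,b}(g)=\z_N$); hence $\sum_n a_n g^n$ acts by $\sum_n a_n\z_N^{hn}$. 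Since $\z_N$ is a root of $\sum_n a_n t^n$ and $\z_N^h$ is a conjugate root (as $h\in(\Z/N)^\times$), this scalar vanishes, giving $\sum_n a_n\k_n=0$. The main subtlety is making the isotypic/scalar argument rigorous over $\Z$ rather than over $\C$; I would phrase it by pairing against the basis $\wt\o_N^{a',b'}$ using \eqref{Okappa} and checking the resulting complex relation, then concluding the integral relation since the $\k_n$ lie in the torsion-free lattice $H_1(X_N^{[a,b]}(\C),\Z)$ and a $\Q$-linear (indeed $\C$-linear) relation among lattice vectors is an integral one.

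For part (iii), I would compute $F_\infty\k_n$ directly using \eqref{Fkappa}, namely $F_\infty\k=g^{-1,-1}_*\k$, together with the commutation of $F_\infty$ with the $\Q$-rational projector $p_{N*}^{[a,b]}$ and with the $G_N$-action up to the sign twist. Explicitly,
$$F_\infty\k_n=\sum_{h\in(\Z/N)^\times}\ol{\z_N^{hn}}\,p_{N*}^{ha,hb}(F_\infty\k)=\sum_{h}\z_N^{-hn}p_{N*}^{ha,hb}(g^{-1,-1}_*\k),$$
where complex conjugation on the coefficients $\z_N^{hn}$ arises because $F_\infty$ is antilinear on the relevant eigendecomposition. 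Using $\theta_N^{ha,hb}(g^{-1,-1})=\z_N^{-h(a+b)}=\z_N^{hc}$ (since $a+b+c=0$), the factor $g^{-1,-1}_*$ contributes $\z_N^{hc}$, so the total coefficient of $p_{N*}^{ha,hb}(\k)$ becomes $\z_N^{-hn}\z_N^{hc}=\z_N^{h(c-n)}$, yielding $F_\infty\k_n=\k_{c-n}$. The hard part here is getting the conjugation and the sign twist bookkeeping exactly right; I would verify it by checking the action on $\wt\o_N^{a,b}$ via $F_\infty\wt\o_N^{a,b}=\wt\o_N^{-a,-b}$ and \eqref{Okappa}, which pins down the coefficients unambiguously and confirms the index shift $n\mapsto c-n$.
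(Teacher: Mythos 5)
Your proposal is correct and follows essentially the same route as the paper: (i) and (ii) are exactly the straightforward arguments the paper dismisses as ``easy'', and for (iii) you use the same two inputs, namely $F_\infty\k=g^{-1,-1}_*\k$ and the anticommutation $F_\infty\circ g_*=g^{-1}_*\circ F_\infty$, together with $\theta_N^{a,b}(g^{-1,-1})=\z_N^{c}$. The only cosmetic difference is that the paper applies $F_\infty$ \emph{before} expanding into the $p_{N*}^{ha,hb}$-components, computing $F_\infty\k_n=p_{N*}^{[a,b]}\circ g^{-n}_*\circ g^{-1,-1}_*(\k)=\k_{c-n}$ directly and thereby avoiding the antilinear bookkeeping you rightly flag as the delicate point.
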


\begin{proof}(i) and (ii) are easy. 
By \eqref{Fkappa}, we have
$$F_\infty \circ g^n_*(\k)
=g^{-n}_*\circ F_\infty(\k)
=g^{-n}_*\circ g^{-1,-1}_*(\k).$$
Since $F_\infty$ commutes with $p_N^{[a,b]}$, we have
$$F_\infty \k_n
= p_{N*}^{[a,b]} \circ g^{-n}_*\circ g^{-1,-1}_*(\k)=\sum_{h\in (\Z/N)^\times} \z_N^{-h(n+a+b)}p_{N*}^{ha,hb}(\k)=\k_{c-n},$$
hence (iii) is proved. 
\end{proof}

This proposition enables us to find a $\Z$-basis of $T_N^{[a,b]}$ in each case. 
We only give one example;  
other cases are similarly determined (see Table \ref{table} below). 

\begin{ex}
Let $N=10$, $(a,b)=(1,2)$. Then, by (i) and (ii), 
$\{\k_n \mid n=0, \dots, 3\}$ is a basis of $H_1(X_{10}^{[1,2]}(\C),\Z)$. By (iii) and (ii), we have 
\begin{align*}
& F_\infty \k_0=\k_7=-\k_2, \quad F_\infty\k_1=\k_6=-\k_1, \\ 
&F_\infty\k_2=\k_5=-\k_0, \quad F_\infty \k_3=\k_4=\k_3-\k_2+\k_1-\k_0. 
\end{align*}
Therefore, a basis of $T_{10}^{[1,2]}$ is given by 
$\k_0-F_\infty \k_0=\k_0+\k_2$ and $\k_1$. 
\end{ex}

We put 
$$\k_n^-:=\k_n-F_\infty\k_n=\k_n-\k_{c-n}\ \in T_N^{[a,b]}.$$
Note that, if $\k_n \in T_N^{[a,b]}$ itself, then $\k_n=\k_n^-/2$.
The periods along $\k_n$ and $\k_n^-$ are given as follows. 

\begin{ppn}\label{Tperiod}
For any $n \in \Z/N$ and $h \in(\Z/N)^\times$, we have
\begin{align*}
& \int_{\k_n} \wt\o_N^{ha,hb} = \z_N^{hn}(1-\z_N^{ha})(1-\z_N^{hb}),\\
& \int_{\k_n^-}(\wt\o_N^{ha,hb}-\wt\o_N^{-ha,-hb}) 
=4i \Im(\z_N^{hn}(1-\z_N^{ha})(1-\z_N^{hb})).
\end{align*}
\end{ppn}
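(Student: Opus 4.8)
The plan is to compute both period integrals directly from the definition of $\k_n$ and the known period formula \eqref{Okappa}, exploiting linearity of integration together with the explicit expansion
$$\k_n = \sum_{h' \in (\Z/N)^\times} \z_N^{h'n} p_{N*}^{h'a,h'b}(\k).$$
First I would integrate $\wt\o_N^{ha,hb}$ over $\k_n$. Since $\wt\o_N^{ha,hb}$ is an eigenform lying in $H^1(X_N^{[a,b]}(\C),\C)$ and the $p_{N*}^{h'a,h'b}(\k)$ are the corresponding dual homology components, the pairing will pick out only the matching term $h'=h$; all other terms vanish by the orthogonality of the projectors on eigencomponents. Concretely, $\int_{p_{N*}^{h'a,h'b}(\k)} \wt\o_N^{ha,hb}$ equals $\int_\k \wt\o_N^{ha,hb}$ when $h'=h$ and $0$ otherwise, because $p_{N}^{h'a,h'b *}$ acts as the identity on $\wt\o_N^{ha,hb}$ exactly when $(h'a,h'b)=(ha,hb)$, i.e. $h'=h$ (using $(a,b)\in I_N^\prim$ so that $h\mapsto (ha,hb)$ is injective on $(\Z/N)^\times$), and trivially otherwise. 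Combined with \eqref{Okappa}, which gives $\int_\k \wt\o_N^{ha,hb}=(1-\z_N^{ha})(1-\z_N^{hb})$, this yields the first formula
$$\int_{\k_n}\wt\o_N^{ha,hb}=\z_N^{hn}(1-\z_N^{ha})(1-\z_N^{hb}).$$

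Next I would treat the second identity. By definition $\k_n^-=\k_n-F_\infty\k_n$, so I expand the integrand and the cycle and split into four terms. The key inputs are the first formula just established, the relation $F_\infty\wt\o_N^{ha,hb}=\wt\o_N^{-ha,-hb}=c_\infty\wt\o_N^{ha,hb}$ recalled in \S3.2, and the compatibility $\int_{F_\infty\g}\eta = \ol{\int_\g F_\infty\eta}$ (equivalently, $F_\infty$ is an antiholomorphic involution whose effect on periods is complex conjugation combined with the sign on $\Z(1)$). Writing $z:=\z_N^{hn}(1-\z_N^{ha})(1-\z_N^{hb})$, the integral of $\wt\o_N^{ha,hb}$ over $\k_n$ is $z$, while over $F_\infty\k_n$ it becomes the conjugate-type expression with $n\mapsto c-n$; a short computation using $\z_N^{h(c-n)}=\z_N^{-hn}\z_N^{-h(a+b)}$ together with $(1-\z_N^{-ha})(1-\z_N^{-hb})=\z_N^{-h(a+b)}(1-\z_N^{ha})(1-\z_N^{hb})\cdot(\text{sign})$ will show that the $F_\infty$-contribution is precisely $\ol z$. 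Then
$$\int_{\k_n^-}(\wt\o_N^{ha,hb}-\wt\o_N^{-ha,-hb})=(z-\ol z)-(\ol z - z)=2(z-\ol z)=4i\,\Im(z),$$
which is the claimed formula.

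The main obstacle I anticipate is bookkeeping the sign conventions and the $\Z(1)$-twist coming from $F_\infty$ versus $c_\infty$: one must be careful that $F_\infty$ acts on the homology cycle while $c_\infty$ acts on the coefficients $\Z(1)=2\pi i\Z$, and the statement in \S3.2 that the ``$+$''-part is fixed by $F_\infty\ot c_\infty$ means these two conjugations are intertwined rather than independent. I would verify explicitly, using $F_\infty\k=g^{-1,-1}_*\k$ from \eqref{Fkappa} and $F_\infty\k_n=\k_{c-n}$ from Proposition \ref{Tbasis}(iii), that the period over $F_\infty\k_n$ of $\wt\o_N^{ha,hb}$ indeed comes out as the complex conjugate of the period over $\k_n$, so that the four-term cancellation collapses cleanly to $4i\,\Im(z)$. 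Once these conjugation identities are pinned down, the computation is entirely formal and reduces to the orthogonality of eigencomponents plus \eqref{Okappa}.
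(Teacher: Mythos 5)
Your proof is correct and follows essentially the same route as the paper: the first formula is obtained exactly as in the paper's proof by expanding $\k_n=\sum_{h'}\z_N^{h'n}p_{N*}^{h'a,h'b}(\k)$, using that $(p_N^{h'a,h'b})^*$ kills $\wt\o_N^{ha,hb}$ unless $h'=h$, and invoking \eqref{Okappa}; the second formula then reduces to the first by the four-term expansion. The only (cosmetic) divergence is that the paper evaluates $\int_{F_\infty\k_n}\wt\o_N^{ha,hb}$ via the adjunction $\int_{F_\infty\k_n}\eta=\int_{\k_n}F_\infty\eta$ together with $F_\infty\wt\o_N^{ha,hb}=\wt\o_N^{-ha,-hb}$, whereas you route through $F_\infty\k_n=\k_{c-n}$ and reapply the first formula; both give the same value $\ol z$.

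One caution. The auxiliary identity you state, $\int_{F_\infty\g}\eta=\ol{\int_\g F_\infty\eta}$, is not correct: the adjunction for the involution $F_\infty$ of $X_N(\C)$ carries no complex conjugation, i.e. $\int_{F_\infty\g}\eta=\int_\g F_\infty\eta$, and the conjugation enters only because, for an \emph{integral} cycle $\g$, one has $\int_\g\wt\o_N^{-a,-b}=\ol{\int_\g\wt\o_N^{a,b}}$. Had you actually used your stated identity, you would have gotten $\int_{F_\infty\k_n}\wt\o_N^{ha,hb}=\ol{\int_{\k_n}\wt\o_N^{-ha,-hb}}=z$ rather than $\ol z$, and the four terms would cancel to $0$. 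Fortunately your actual computation does not pass through that identity: you use $F_\infty\k_n=\k_{c-n}$, $\z_N^{h(c-n)}=\z_N^{-hn}\z_N^{-h(a+b)}$, and $(1-\z_N^{-ha})(1-\z_N^{-hb})=\z_N^{-h(a+b)}(1-\z_N^{ha})(1-\z_N^{hb})$ (the sign you left undetermined is $+1$, the two factors of $-1$ cancelling), which correctly yields $\int_{F_\infty\k_n}\wt\o_N^{ha,hb}=\ol z$ and hence $2(z-\ol z)=4i\Im(z)$. With that one identity corrected or simply deleted, the argument is complete.
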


\begin{proof}
Since
$$
\int_{\k_n}\wt\o_N^{ha,hb} 
= \sum_{h' \in(\Z/N)^\times} \z_N^{h'n} \int_{\k} (p_N^{h'a,h'b})^*\wt\o_N^{ha,hb} \\
= \z_N^{hn}\int_\k \wt\o_N^{ha,hb},$$
the first formula follows from \eqref{Okappa}.  
Since 
$$\int_{F_\infty \k_n}\wt\o_N^{ha,hb} = \int_{\k_n} F_\infty\wt\o_N^{ha,hb}
= \int_{\k_n} \wt\o_N^{-ha,-hb},$$
the second formula follows from the first.
\end{proof}

\begin{rmk}\label{kappa gamma}
The relation between the $\Q$-structure given in Proposition \ref{Q-structure} and the $\Z$-structure given above is as follows. 
There exists an element $\g \in H_1(X_N(\C),\Q)$ such that 
$F_\infty \g=\g$, $\int_\g \wt\o_N^{a,b}=1$ for all $(a,b) \in I_N$, and 
$\k=((1-g^{1,0})(1-g^{0,1}))_*(\g)$ (see \cite{otsubo-1}, \S4.6). By the same procedure as above starting with $\g$ instead of $\k$, we obtain 
a basis $\{\g_n \mid n=1,\dots, 2g\}$ of $H_1(X_N^{[a,b]}(\C),\Q)$ such that $\int_{\g_n}\wt\o_N^{ha,hb}=\z_N^{hn}$ for any $h \in (\Z/N)^\times$. 
If $\{\g_n^\vee \mid n=1,\dots, 2g\}$ denotes the dual basis of $H^1(X_N^{[a,b]}(\C),\Q)$, then we have 
$\l_n=2\pi i (\g_n^\vee-F_\infty \g_n^\vee)$. 
\end{rmk}

\subsection{Regulators}

As in \cite{d-j-z}, for a curve $X$ over $\Q$, we define the {\em $\Z$-structure} 
of $H^2_\sM(X,\Q(2))_\Z$ to be
$$\Im\bigl(\Ker(T) \ra H^2_\sM(X,\Q(2))\bigr) \cap H^2_\sM(X,\Q(2))_\Z.$$
For Fermat motives, we define the $\Z$-structure of $H_\sM^2(X_N^{[a,b]},\Q(2))_\Z$ 
to be the image of that of $H^2_\sM(X_N,\Q(2))_\Z$ under $p_N^{[a,b]*}$. 
Then, our elements 
$e_N^{[a,b]}$, $e_{N,\a}^{[a,b]}$, $e_{N,\b}^{[a,b]}$ belong to the $\Z$-structure by definition. 

For each $(a,b) \in I_N^\prim$, choose a $\Z$-basis $\{t_n \mid n=1,\dots, g\}$ of $T_N^{[a,b]}$ and put
$$
D_N^{a,b}= \left|\det \left( \int_{t_n}(\wt\o_N^{ha,hb}-\wt\o_N^{-ha,-hb})\right)_{h\in H_N^{a,b},n=1,\dots,g}\right|,  
$$
which is independent of the choice of a basis. 
This can be computed using Propositions \ref{Tbasis} and \ref{Tperiod}.
Then, for each case of \S4 where we have the surjectivity of $r_\sD \ot_\Q \R$, 
the regulator determinant of our $g$ elements with respect to the $\Z$-structure 
of Deligne cohomology is given by
$$\wt R_N^{a,b} := \frac{D_N^{a,b}}{D_N}R_N^{a,b}.$$
In Table \ref{table} below, 
we summarize a basis of $T_N^{[a,b]}$, the period determinant $D_N^{a,b}$ 
and the ratio $\wt R_N^{a,b}/L^*(j_N^{a,b},0) \in \Q^\times$ (with $100$ digits precision) of the regulator determinant to the $L$-value.

\begin{table}[htdp]
\caption{}
\begin{center}
\begin{tabular}{c c c c c cc c}
\hline
\ $g$ \ &\  $N$ \ &\  $(a,b) \ $ & $H_N^{a,b}$ & $T_{N}^{[a,b]}$ & $D_N^{a,b}$ 
& \ $\wt R_N^{a,b}/L^*(j_N^{a,b},0)$\ \\
\hline
$1$ & $3$ & $(1,1)$ & $\{1\}$ & $\angle{\k_0^-}$ & $2 \cdot 3 \sqrt{3}$ & $2\cdot 3^2$\\
& $4$ & $(1,1)$  & $\{1\}$ & $\angle{\k_0^-/2}$ & $2^2$ & $2^2$\\
&        & $(1,2)$  & $\{1\}$ & $\angle{\k_0^-}$ & $2^3$ & $2^3$\\
& $6$ & $(1,1)$  & $\{1\}$ & $\angle{\k_0^-}$ & $2\sqrt{3}$ & $-1$\\
&  & $(1,2)$  & $\{1\}$ & $\angle{\k_1^-/2}$ & $2\sqrt{3}$ & $2\cdot 3$\\
&  & $(1,3)$  & $\{1\}$ & $\angle{\k_0^-}$ & $2\sqrt{3}$ & $1$\\
&  & $(1,4)$  & $\{1\}$ & $\angle{\k_0^-}$ & $2\sqrt{3}$ & $1$\\
&  & $(2,3)$  & $\{1\}$ & $\angle{\k_0^-}$ & $2^2\sqrt{3}$ & $2$\\
\hline
$2$ & $5$ & $(1,1)$  & $\{1, 2\}$ & $\angle{\k_0^-, \k_1^-}$ & $2^2\cdot 5^2$ & $2^2\cdot 5^2$\\
& $10$ & $(1,2)$ & $\{1, 3\}$ & $\angle{\k_0^-, \k_1^-/2}$ & $2\cdot 5$ & $1/2$  \\
& & $(1,4)$  & $\{1, 3\}$ & $\angle{\k_0^-/2, \k_1^-}$ & $2\cdot 5$ & $2$ \\
& & $(1,6)$  & $\{1, 7\}$ & $\angle{\k_0^-, \k_1^-}$ & $2^2\cdot 5$ & $1$\\
& & $(2,5)$  & $\{1, 7\}$ & $\angle{\k_0^-, \k_1^-}$ & $2^4\cdot 5$ & $2^2$\\
& $12$ & $(1,2)$ & $\{1, 7\}$ &  $\angle{\k_0^-, \k_1^-}$ & $2^3 \sqrt{3}$ & $2^2/3$\\
& & $(1,6)$  & $\{1, 5\}$ & $\angle{\k_0^-, \k_1^-}$ & $2^5\sqrt{3}$ & $-2^3/3$\\
& & $(2,3)$  & $\{1, 7\}$ & $\angle{\k_0^-, \k_2^-}$& $2^4\sqrt{3}$ & $2^2/3$\\
\hline
$3$ & $7$ & $(1,2)$  & $\{1, 2, 4\}$ & $\angle{\k_0^-, \k_1^-, \k_5^-}$ & $2^3 \cdot 7^2 \sqrt{7}$ & $2^3\cdot 7^3$\\
& $9$ & $(1,2)$ & \ $\{1, 2, 5\}$ \  & \ $\angle{\k_0^-, \k_1^-, \k_2^-}$ \ &  \ $2^3\cdot 3^3 \sqrt{3}$ \  & $2^3 \cdot 3^2$\\ 
\hline
\end{tabular}
\end{center}
\label{table}
\end{table}

\begin{rmk}\label{p-reg}
Bloch-Kato's Tamagawa number conjecture \cite{b-k} predicts the rational factor in terms of 
the $p$-adic regulators for all prime numbers $p$. Let  
$$V_p=H^1_{\mathrm{\acute{e}t}}(X_N^{[a,b]} \ot_\Q \ol \Q, \Q_p(2))$$
be the $p$-adic \'etale cohomology group, on which the absolute Galois group $G_\Q:=G(\ol \Q/\Q)$ acts continuously. 
We have the $p$-adic regulator map to the Galois cohomology 
$$r_p \colon H_\sM^2(X_N^{[a,b]},\Q(2))_\Z \ra H^1(G_\Q,V_p)$$
induced from the Chern class map. 
The conjecture states firstly that $r_p \ot_\Q \Q_p$ is isomorphic onto the {\em Selmer group} $H^1_f(G_\Q,V_p) \subset H^1(G_\Q,V_p)$. 
Secondly, the $p$-part of the rational factor in the Beilinson conjecture, coming from the indeterminacy of a $\Q$-basis of the motivic cohomology, is conjecturally detected by the ``index" of the $\Z_p$-module generated by the image of the basis under $r_p$, with respect to a canonical $\Z_p$-lattice $H^1_f(G_\Q,T_p) \subset H^1_f(G_\Q,V_p)$. 
In view of the conjecture, our results (see Table \ref{table}) suggest, as in the case of CM elliptic curves (\cite{b-k}, \S7), 
that, for $p \nmid 2N$, the images of our motivic elements generate the $\Z_p$-module $H^1_f(G_\Q,T_p)$. 
\end{rmk}

\bigskip

\begin{center}{\sc Acknowledgements}\end{center}
\smallskip

This article was written while the author was visiting University of Toronto.  
I would like to thank Kumar Murty and the department for their hospitality. 
I would like to thank Rob de Jeu, Christophe Soul\'e and Takuya Yamauchi for stimulating discussions. 
Finally, I would like to thank the contributors to Magma.  


\end{document}